\newcommand{\beq}{\begin{equation}}
\newcommand{\eeq}{\end{equation}}
\newcommand{\bea}{\begin{eqnarray}}
\newcommand{\eea}{\end{eqnarray}}
\newcommand{\beas}{\begin{eqnarray*}}
\newcommand{\eeas}{\end{eqnarray*}}
\newtheorem{theorem}{Theorem}[section]
\newtheorem{lemma}[theorem]{Lemma}
\newtheorem{prop}[theorem]{Proposition}
\newtheorem{definition}[theorem]{Definition}
\newtheorem{remark}[theorem]{Remark}
\newtheorem{ass}[theorem]{Assumption}
\def\RR{\mathbb R}
\def\FF{\mathbb F}
\def\EE{\mathsf E}
\def\PP{\mathsf P}
\def\QQ{\mathsf Q}
\def\cF{{\cal F}}
\def\cA{{\cal A}}
\def\cS{{\cal S}}
\def\cI{{\cal I}}
\def\cL{{\cal L}}
\def\cJ{{\cal J}}
\def\cM{{\cal M}}
\def\XX{\widetilde{X}}
\def\WW{\widetilde{W}}
\def\theequation{\arabic{section}.\arabic{equation}}
\begin{document}

\title{\textbf{Stochastic nonzero-sum games: \\ a new connection between singular control \\ and optimal stopping}}

\author{Tiziano De Angelis\thanks{School of Mathematics, University of Leeds, Woodhouse Lane, Leeds LS2 9JT, United Kingdom; \texttt{t.deangelis@leeds.ac.uk}}\:\:\emph{and}\:\:Giorgio Ferrari\thanks{Center for Mathematical Economics, Bielefeld University, Universit\"atsstrasse 25, D-33615 Bielefeld, Germany; \texttt{giorgio.ferrari@uni-bielefeld.de}}}

\date{\today}
\maketitle

\textbf{Abstract.} In this paper we establish a new connection between a class of $2$-player nonzero-sum games of optimal stopping and certain $2$-player nonzero-sum games of singular control. We show that whenever a Nash equilibrium in the game of stopping is attained by hitting times at two separate boundaries, then such boundaries also trigger a Nash equilibrium in the game of singular control. Moreover a differential link between the players' value functions holds across the two games. 

\medskip

{\textbf{Keywords}}: games of singular control, games of optimal stopping, Nash equilibrium, one-dimensional diffusion, Hamilton-Jacobi-Bellman equation, verification theorem.

\smallskip

{\textbf{MSC2010 subject classification}}: 91A15, 91A05, 93E20, 91A55, 60G40, 60J60, 91B76.



\section{Introduction}
\label{Introduction}

Connections between some problems of singular stochastic control (SSC) and questions of optimal stopping (OS) are well known in control theory.
In 1966 Bather and Chernoff \cite{BatherChernoff67} studied the problem of controlling the motion of a spaceship which must reach a given target within a fixed period of time, and with minimal fuel consumption. This problem of aerospace engineering was modeled in \cite{BatherChernoff67} as a singular stochastic control problem, and an unexpected link with optimal stopping was observed. The value function of the control problem was indeed differentiable in the direction of the controlled state variable, and its derivative coincided with the value function of an optimal stopping problem. 

The result of Bather and Chernoff was obtained by using mostly tools from analysis. Later on, Karatzas \cite{K81,K83}, and Karatzas and Shreve \cite{KaratzasShreve84} employed fully probabilistic methods to perform a systematic study of the connection between SSC and OS for the so-called ``monotone follower problem''. The latter consists of tracking the motion of a stochastic process (a Brownian motion in \cite{K81}, \cite{K83}, \cite{KaratzasShreve84}) by a nondecreasing control process in order to maximise (minimise) a performance criterion which is concave (convex) in the control variable. Further, a link to optimal stopping was shown to hold also for monotone follower problems of finite-fuel type; i.e.\ where the total variation of the control (the fuel available to the controller) stays bounded (see \cite{ElKK88}, \cite{Karatzas85}, and also \cite{Bank} for dynamic stochastic finite-fuel). More recent works provided extensions of the above results to diffusive settings in \cite{BenthReikvam} and \cite{BoetiusKohlmann}, to Brownian two-dimensional problems with state constraints in \cite{BudhirajaRoss}, to It\^o-L\'evy dynamics under partial information in \cite{OksendalSulem12}, and to non-Markovian processes in \cite{BK97}.

It was soon realised that these kinds of connections could be established in wider generality with admissible controls which are of bounded variation as functions of time (rather than just monotone). Indeed, under suitable regularity assumptions (including convexity or concavity of the objective functional with respect to the control variable) the value function of a bounded variation control problem is differentiable in the direction of the controlled state variable, and its derivative equals the value function of a $2$-player zero-sum game of optimal stopping (Dynkin game). To the best of our knowledge, this link was noticed for the first time in \cite{Taksar85} in a problem of controlling a Brownian motion, and then generalised in \cite{Boetius} and \cite{KW}, and later on also in \cite{GT08} via optimal switching.

It is important to observe that despite their appearance in numerous settings, connections between SSC and OS are rather ``delicate'' and should not be given for granted, even for monotone follower problems with very simple diffusion processes. Indeed, counterexamples were recently found in \cite{DeAFeMo15b} and \cite{DeAFeMo15c} where the connection breaks down even if the cost function is arbitrarily smooth and the underlying processes are Ornstein-Uhlenbeck or Brownian motion. 

The existing theory on the connection between SSC and OS is well established for \emph{single agent} optimisation problems. However, the latter are not suitable for the description of more complex systems where strategic interactions between several decision makers play a role. Problems of this kind arise for instance in economics and finance when studying productive capacity expansion in an oligopoly \cite{Steg10}, the competition for the market-share control \cite{KZ15}, or the optimal control of an exchange rate by a central bank (see the introduction of the recent \cite{HHSZ} for such an application).

In this paper we establish a new connection between a class of $2$-player nonzero-sum games of optimal stopping (see \cite{DeAFeMo15} and references therein) and certain $2$-player nonzero-sum games of singular stochastic control. These games involve two different underlying (one-dimensional) It\^o-diffusions. The one featuring in the game of controls will be denoted by $\XX$, whereas the one featuring in the game of stopping will be denoted by $X$.

In the game of controls each player may exert a monotone control to adjust the trajectory of $\widetilde{X}$. The first player can only increase the value of $\widetilde{X}$, by exerting her control, while the second player can only decrease the value of $\XX$, by exerting her control. If player 1 uses a unit of control at time $t>0$, then she must pay $G_1(\widetilde{X}_t)$, while at the same time player 2 receives $L_2(\widetilde{X}_t)$. A symmetric situation occurs if player 2 exerts control (see Section \ref{sec:contr}). Each player wants to maximise her own total expected reward functional. 

In the game of stopping both players observe the dynamics of $X$ and may decide to end the game, by choosing a stopping time for $X$. When the game ends, each player pays a cost according to the following rule: if the $i$-th player stops first, she pays $G_i$; if instead the $i$-th player lets the opponent stop first, she pays $L_i$. Here $G_i$ and $L_i$ are the same functions as in the game of controls, and in general they depend on the value of $X$ at the random time when the game is ended.

We show that if a Nash equilibrium in the game of stopping is attained by hitting times of two separate thresholds, i.e.~the process $X$ is stopped as soon as it leaves an interval $(a_*,b_*)$ of the real line, then the couple of controls that keep $\widetilde{X}$ inside $[a_*,b_*]$ with minimal effort (i.e.\ according to a Skorokhod reflection policy) realises a Nash equilibrium in the game of singular controls. Moreover, we also prove that the value functions of the two players in the game of singular controls can be obtained by suitably integrating their respective ones in the game of optimal stopping. The existence of Nash equilibria of threshold type for the game of stopping holds in a large class of examples as it is demonstrated in the recent \cite{DeAFeMo15}. Here the proof of our main theorem (cf.\ Theorem \ref{thm:labomba} below) is based on a verification argument following an educated guess. In order to illustrate an application of our results we present a game of pollution control between a social planner and a firm representative of the productive sector.

Another important result of this paper is a simple explicit construction of Markov-perfect equilibria\footnote{i.e.~equilibria in which each player dynamically reacts to her opponent's decisions} for a class of 2-player continuous time stochastic games of singular control. This is a problem in game theory which has not been solved in full generality yet (see the discussion in Section 2 of \cite{BP09} and in \cite{Steg10}), and here we contribute to further improve results in that direction. We seek for Nash equilibria in the class of control strategies $\mathcal{M}$ which forbids the players to exert simultaneous impulsive controls (i.e.~simultaneous jumps of their control variables). On the one hand, this is a convenient choice for technical reasons, but, on the other hand, we also show in Appendix \ref{app-setM} that it induces no loss of generality in a large class of problems commonly addressed in the literature on singular stochastic control.

It is worth emphasising a key difficulty in handling nonzero-sum games. If, e.g., player $1$ deviates unilaterally from an equilibrium strategy this has two effects: it worsens player 1's performance, but it also affects player 2's payoff. However it is impossible to establish \emph{a priori} whether such a deviations benefit or harm player 2. This issue does not arise in single-agent problems and in two-player zero-sum games where the optimisation involves a unique objective functional. From a PDE point of view this is expressed by the fact that our nonzero-sum game of controls is associated to a system of coupled variational inequalities, rather than to a single variational inequality. Thus there is a fundamental difference between the nature of our results and the one of those already known for certain (single-agent) bounded variation control problems (see e.g.~\cite{Boetius}, \cite{Taksar85}). 

Our work marks a new step towards a global view on the connection between singular stochastic control problems and questions of optimal stopping by extending the existing results to nonzero-sum, multi-agent optimisation problems. A link between these two classes of optimisation problems is important not only from a purely theoretical point of view but also from a practical point of view. Indeed, as it was pointed out in \cite{KaratzasShreve84} (cf.~p.\ 857) one may hope to ``jump'' from one formulation to the other in order to ``pose and solve more favourable problems''. As an example, one may notice that questions of existence and uniqueness of optimisers are more tractable in control problems, than in stopping ones; on the other hand, a characterisation of optimal control strategies is in general a harder task than the one of optimal stopping rules. Recent contributions to the literature (e.g., \cite{CH09}, \cite{DeAFe14} and \cite{Ferrari}) have already highlighted how the combined approach of singular stochastic control and optimal stopping is extremely useful to deal with investment/consumption problems for a single representative agent. It is therefore reasonable to expect that our work will increase the mathematical tractability of investment/consumption problems for multiple interacting agents.

The rest of the paper is organised as follows. In Section \ref{sec:setting} we introduce the setting, the game of singular controls and the game of optimal stopping. In Section \ref{sec:main} we prove our main result and we discuss the assumptions needed. An application to a game of pollution control is considered in Section \ref{sec:pollution}, whereas some proofs and a discussion regarding admissible strategies are collected in the appendix.


\section{Setting}
\label{sec:setting}

\subsection{The underlying diffusions}
\label{sec:gc}

Denote by $(\Omega, \mathcal{F},\PP)$ a complete probability space equipped with a filtration $\FF=(\mathcal{F}_t)_{t \geq 0}$ under usual hypotheses. Let $\WW=(\WW_t)_{t\geq 0}$ be a one-dimensional standard Brownian motion adapted to $\FF$, and $(\XX^{\nu,\xi}_t)_{t\ge 0}$ the strong solution (if it exists) to the one-dimensional, controlled stochastic differential equation (SDE)  
\begin{equation}
\label{state:X}
d\XX^{\nu,\xi}_t=\mu(\XX^{\nu,\xi}_t)dt+\sigma(\XX^{\nu,\xi}_t)d\WW_t+d\nu_t-d\xi_t,\qquad \XX^{\nu,\xi}_0=x\in\cI,
\end{equation}
with $\cI:=(\underline{x},\overline{x})\subseteq \RR$ and with $\mu$, $\sigma$ real valued functions which we will specify below.
Here $(\nu_t)_{t\ge 0}$ and $(\xi_t)_{t\ge0}$ belong to 
\begin{align}\label{def:S}
\cS:=\big\{\eta: (\eta_t(\omega))_{t\ge 0}\: \text{left-continuous, adapted, increasing, with $\eta_0=0$, $\PP$-a.s.}\big\}
\end{align}
and we denote 
\begin{align}\label{eq:sigmaI}
\sigma_{\,\cI}:=\inf\{t\ge0\,:\,\XX^{\nu,\xi}_t\notin \cI\}
\end{align}
the first time the controlled process leaves $\cI$. 

Notice that $\nu$ and $\xi$ can be expressed as the sum of their continuous part and pure jump part, i.e.
\begin{align}
\nu_t=\nu^c_t+\sum_{s<t}\Delta \nu_s, \quad \xi_t=\xi^c_t+\sum_{s<t}\Delta \xi_s,
\end{align}
where $\Delta\nu_s:=\nu_{s+}-\nu_s$ and $\Delta\xi_s:=\xi_{s+}-\xi_s$. 
Throughout the paper we will consider the process $\XX^{\nu,\xi}$ killed at $\sigma_\cI$, and we make the following assumptions on $\mu$ and $\sigma$. 

\begin{ass}
\label{ass:coef}
The functions $\mu$ and $\sigma$ are in $C^1(\cI)$ and $\sigma(x)>0$, $x\in\cI$. 
\end{ass}
\noindent Because $\mu$ and $\sigma$ are locally Lipschitz, for any given $(\nu,\xi) \in \cS \times \cS$ equation \eqref{state:X} has a unique strong solution (Theorem V.7 in \cite{Protter} and the text after its proof).

To account for the dependence of $\XX$ on its initial position, from now on we shall write $\XX^{x,\nu,\xi}$ where appropriate. In the rest of the paper we use the notation $\EE_x[f(\XX^{\nu,\xi}_t)]=\EE[f(\XX^{x,\nu,\xi}_t)]$, for $f$ Borel-measurable, since $(\XX,\nu,\xi)$ is Markovian but the initial value of the controls is always zero. Here $\EE_x$ is the expectation under the measure $\PP_x(\,\cdot\,):=\PP(\,\cdot\,|\XX_0=x)$ on $(\Omega,\cF)$.
As mentioned in the introduction, \eqref{state:X} will be the underlying process in the game of control. 

To keep the notation simple and avoid introducing another filtered probability space, we also assume that the filtered probability space $(\Omega,\cF,\FF,\PP)$ is sufficiently rich to allow for the treble $(\Omega,\cF,\PP)$, $\FF$, $(X,W)$ to be a weak solution to the SDE 
\begin{align}\label{state:XX}
dX_t=\big(\mu(X_t)+\sigma(X_t)\sigma'(X_t)\big)dt+\sigma(X_t)dW_t,\quad X_0=x\in\cI,
\end{align}
where $W$ is another Brownian motion. Notice that this requirement does not affect generality of our results because $\XX$ and $X$ never feature at the same time in our optimisation problems. In particular $X$ will appear only in the game of stopping. 

Assumption \ref{ass:coef} guarantees that the above SDE admits a weak solution which is unique in law up to a possible explosion time \cite[Ch.~5.5]{KS}. Indeed for every $x \in \mathcal{I}$ there exists $\varepsilon_o>0$ such that
\begin{equation}
\label{LI}
\int_{x-\varepsilon_o}^{x+\varepsilon_o}\frac{1 + |\mu(z)|+|\sigma(z)\sigma'(z)|}{|\sigma(z)|^2}\,dz < +\infty.
\end{equation}
To account explicitly for the initial condition, we denote by $X^x$ the solution to \eqref{state:XX} starting from $x\in\cI$ at time zero. 
Due to \eqref{LI} the diffusion $X$ is regular in $\mathcal{I}$; that is, if $\tau_z:=\inf\{t\geq0: X^x_t=z\}$ one has $\PP(\tau_z<\infty) > 0$ for every $x$ and $z$ in $\mathcal{I}$ so that the state space cannot be decomposed into smaller sets from which $X$ cannot exit (see \cite[Ch.~2]{BS}). 

We make the following standing assumption. 
\begin{ass}
\label{ass:boundary}
The points $\underline{x}$ and $\overline{x}$ are either natural or entrance-not-exit for the diffusion $X$, hence unattainable. Moreover, $\underline{x}$ and $\overline{x}$ are unattainable for the uncontrolled process $\XX^{0,0}$.
\end{ass}
For boundary behaviours of diffusions one may consult p.~15 in \cite{BS}. Unattainability of $\underline x$ and $\overline x$ refers to the fact that, for $x\in\cI$, the processes $X^x$ and $\XX^{x,0,0}$ cannot leave the interval $(\underline x,\overline x)$ in finite time, $\PP$-a.s. Feller's test for explosion (see, e.g., Theorem 5.5.29 in \cite{KS}) provides necessary and sufficient conditions under which $\underline{x}$ and $\overline{x}$ are unattainable for the diffusions $X$ and $\XX^{0,0}$. Moreover, specific properties of natural and entrance-not-exit boundaries may be addressed by using the speed measure $m(dx)$ and the scale function $S(x)$ of the above diffusions (since we are not going to make use of these concepts we simply refer the interested reader to pp.\ 14--15 in \cite{BS} for details).

In the next remark we show that if $\sigma'$ is sufficient integrable, then unattainable boundary points of $X$ are also unattainable for the uncontrolled process $\XX^{0,0}$.
\begin{remark}
\label{rem:tilde}
For simplicity let us assume that $\sigma \in C^2(\mathcal{I})$ so that both \eqref{state:X} and \eqref{state:XX} admit strong solution. For $x\in\cI$ let us define a new measure $\QQ_x$ by the Radon-Nikodym derivative 
\begin{align*}
Z_t:=\frac{d\QQ_x}{d\PP_x}\bigg|_{\cF_t}=\exp\Big\{\int^t_0\sigma'(\XX^{0,0}_s)d\WW_s-\frac{1}{2}\int^t_0(\sigma')^2(\XX^{0,0}_s)ds\Big\},\qquad\PP_x-\text{a.s.}
\end{align*}
which is an exponential martingale under suitable integrability conditions on $\sigma'$. Hence Girsanov theorem implies that the process $B_t:=\WW_t-\int_0^t\sigma'(\XX^{0,0}_s)ds$ is a standard Brownian motion under $\QQ_x$ and it is not hard to verify that $\text{Law}\,(\XX^{0,0}\big|\QQ_x)=\text{Law}\,(X\big|\PP_x)$.  

It follows that denoting $\sigma^0_{\cI}=\inf\{t>0\,:\,\XX^{0,0}_t\notin\cI\}$ and $\tau_{\cI}=\inf\{t>0\,:\,X_t\notin\cI\}$ we have that $\text{Law}\,(\sigma^0_{\cI}|\QQ_x)=\text{Law}\,(\tau_{\cI}|\PP_x)$. Notice also that the measures $\QQ_x$ and $\PP_x$ are equivalent on $\cF^{\WW}_t$ for all $0\le t<+\infty$, where $(\cF^{\WW}_t)_{t\ge0}$ is the filtration generated by $\WW$ (see \cite{KS}, Chapter 3.5). In particular $\{\sigma^0_{\cI}\le t\}\in\cF^{\WW}_t$. Therefore, using that $\underline{x}$ and $\overline{x}$ are unattainable for $X$, we get
\begin{align*}
0=\PP_x(\tau_{\cI}\le t)=\QQ_x(\sigma^0_{\cI}\le t)\implies\PP_x(\sigma^0_{\cI}\le t)=0
\end{align*}
for all $t>0$. Hence, $\PP_x(\sigma^0_{\cI}<+\infty)=0$ which proves that $\underline{x}$ and $\overline{x}$ are unattainable for the process $\XX^{0,0}$ under $\PP_x$ for all $x\in\cI$.
\end{remark}

The infinitesimal generator of the uncontrolled diffusion $\XX^{x,0,0}$ is denoted by $\cL_{\XX}$ and is defined as
\begin{align}\label{eq:LXX}
(\cL_{\XX} f)\,(x):=\frac{1}{2}\sigma^2(x)f''(x)+\mu(x)f'(x),\quad f\in C^2(\overline{\cI}),\, x\in\cI,
\end{align}
whereas the one for $X$ is denoted by $\cL_{X}$ and is defined as
\begin{align}\label{eq:LX}
(\cL_{X} f)\,(x):=\frac{1}{2}\sigma^2(x)f''(x)+(\mu(x)+\sigma(x)\sigma'(x))f'(x),\quad f\in C^2(\overline{\cI}), x\in\cI.
\end{align}
Letting $r>0$ be a fixed constant, we assume
\begin{ass}\label{ass:rate}
$r>\mu'(x)$ for $x\in\overline{\cI}$.
\end{ass}
\noindent We denote by $\psi$ and $\phi$ the fundamental solutions of the ODE (see \cite[Ch.~2, Sec.~10]{BS})
\begin{align}\label{ODE}
\cL_X u(x)-(r-\mu'(x))u(x)=0,\qquad x\in\cI,
\end{align}
and we recall that they are strictly increasing and decreasing, respectively.

Finally, we denote by $S'(x)$, $x\in\cI$, the density of the scale function of $(X_t)_{t\ge 0}$, and by $w$ the Wronskian
\begin{equation}
\label{Wronskian}
w:= \frac{\psi'(x)\phi(x) - \phi'(x)\psi(x)}{S'(x)}, \quad x \in \cI,
\end{equation}
which is a positive constant.

Particular attention in this paper is devoted to solutions of \eqref{state:X} reflected inside intervals $[a,b]\subset{\cI}$, and we recall here the following result on Skorokhod reflection. Its proof can be found, for instance, in \cite[Thm.~4.1]{Ta79} (notice that $\mu'$ and $\sigma'$ are bounded on $[a,b]$).
\begin{lemma}\label{lem:Sk}
Let Assumption \ref{ass:coef} hold. For any $a,b\in\cI$ with $a< b$ and any $x\in [a,b]$ there exists a unique couple $(\nu^a,\xi^b)\in\cS\times\cS$ that solves the Skorokhod reflection problem $\textbf{SP}(a,b;x)$ defined as: 
\begin{align}
\tag{$\textbf{SP}(a,b;x)$}
\hspace{-10pt}
\text{Find $(\nu,\xi)\in\cS\times\cS$ s.t.}
\left\{
\begin{array}{l}
\XX^{x,\nu,\xi}_t\in[a,b], \text{$\PP$-a.s.~for $0< t\le\sigma_\cI $},\\[+6pt]
\int^{T\wedge\sigma_\cI}_0{\mathds{1}_{\{\XX^{x,\nu,\xi}_t>a\}}d\nu_t}=0, \text{$\PP$-a.s.~for any $T>0$,}\\[+6pt]
\int^{T\wedge\sigma_\cI}_0{\mathds{1}_{\{\XX^{x,\nu,\xi}_t<b\}}d\xi_t}=0, \text{$\PP$-a.s.~for any $T>0$.}
\end{array}
\right.
\end{align}
It also follows that $\text{supp}\{d\nu^a_t\}\cap \text{supp}\{d\xi^b_t\}=\emptyset$. 
\end{lemma}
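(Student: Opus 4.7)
The plan is to reduce the problem to the classical pathwise (deterministic) two-sided Skorokhod map and then drive a Picard iteration. First, since $a<b$ lie in the open interval $\cI$, the closed interval $[a,b]$ is contained in a compact subset of $\cI$ on which $\mu$ and $\sigma$ are bounded and Lipschitz by Assumption \ref{ass:coef}. Because the problem concerns only trajectories up to $\sigma_{\cI}$ and any candidate reflected process lies in $[a,b]$, I may replace $\mu$ and $\sigma$ outside a neighborhood of $[a,b]$ by globally bounded Lipschitz extensions without altering the problem; after this modification the driving SDE admits strong solutions with standard moment estimates.

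The key deterministic ingredient is the two-sided Skorokhod map on $[a,b]$: for every continuous path $y:\RR_+\to\RR$ with $y_0\in[a,b]$ there exists a unique triple $(z,L,U)$ of continuous functions such that $L_0=U_0=0$, $L$ and $U$ are nondecreasing, $z_t=y_t+L_t-U_t\in[a,b]$ for all $t$, and $L$ (resp.\ $U$) grows only on the set $\{t:z_t=a\}$ (resp.\ $\{t:z_t=b\}$). I would invoke the classical iterative / explicit sup--inf construction (the reference \cite{Ta79} is exactly of this type) to obtain existence, uniqueness, and, crucially, Lipschitz continuity of the map $y\mapsto(L,U,z)$ in the sup-norm on every compact time interval.

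With this deterministic map at hand, I would solve the reflected SDE by Picard iteration. Starting from $z^{(0)}\equiv x$, define iteratively
\begin{equation*}
y^{(n+1)}_t:=x+\int_0^t\mu(z^{(n)}_s)\,ds+\int_0^t\sigma(z^{(n)}_s)\,dW_s,
\end{equation*}
and let $(z^{(n+1)},L^{(n+1)},U^{(n+1)})$ be the image of $y^{(n+1)}$ under the Skorokhod map. Combining Lipschitz continuity of the map with the Lipschitz property of $\mu,\sigma$ and the It\^o/BDG inequalities yields a contraction in $\EE[\sup_{s\le t}|z^{(n+1)}_s-z^{(n)}_s|^2]$ on a sufficiently small time interval, hence convergence to a continuous adapted limit $(\XX,L,U)$; the argument is iterated to cover $[0,\sigma_{\cI}]$. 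Setting $\nu^a_t:=L_{t-}$ and $\xi^b_t:=U_{t-}$ produces left-continuous, adapted, nondecreasing processes with $\nu^a_0=\xi^b_0=0$, hence in $\cS$; the minimality property of the Skorokhod map gives the disjoint-support claim $\mathrm{supp}\{d\nu^a_t\}\cap\mathrm{supp}\{d\xi^b_t\}=\emptyset$ directly.

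Uniqueness within $\cS\times\cS$ would follow by a pathwise comparison: for any admissible solution $(\nu,\xi)$ to $\textbf{SP}(a,b;x)$ the driving semimartingale $y_t:=x+\int_0^t\mu(\XX^{x,\nu,\xi}_s)ds+\int_0^t\sigma(\XX^{x,\nu,\xi}_s)dW_s$ is continuous, and the decomposition $\XX^{x,\nu,\xi}=y+\nu-\xi$ satisfies the defining properties of the Skorokhod problem for $y$; uniqueness of the latter forces $(\nu,\xi)$ to coincide with $(\nu^a,\xi^b)$ up to indistinguishability (which in particular shows that the candidates are automatically continuous, so the jump components of $\nu,\xi$ must vanish). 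The main obstacle is the two-sided character of the reflection: unlike the one-sided case, there is no closed-form Tanaka-type formula, and the Lipschitz continuity of the Skorokhod map---on which the whole fixed-point argument rests---requires either the iterative construction of Chaleyat-Maurel--El Karoui or the penalisation approach of \cite{Ta79}; once this is in hand the rest is routine.
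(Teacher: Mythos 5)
Your argument is correct, but note that the paper does not actually write out a proof of Lemma \ref{lem:Sk}: it simply invokes \cite[Thm.~4.1]{Ta79}, observing that $\mu$ and $\sigma$ are Lipschitz on $[a,b]$, and only the one-sided analogue (Lemma \ref{lem:Sk01}) is proved in Appendix \ref{app-proofs}. The natural comparison is therefore with that appendix proof. There the authors also run a Picard iteration, but they avoid any appeal to an abstract Lipschitz two-sided Skorokhod map: in the one-sided problem the regulator at each step of the iteration is given by the explicit running-supremum formula $\nu^{[k+1]}_t=\sup_{0\le s\le t}\big[a-x-\int_0^s\mu(X^{[k]}_u)du-\int_0^s\sigma(X^{[k]}_u)dW_u+\xi_s\big]$, from which the contraction estimate follows directly; the passage from Lipschitz to locally Lipschitz coefficients is then handled by truncating $\mu,\sigma$ at levels $x_n\uparrow\overline{x}$ and using unattainability of the boundary. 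Your route --- composing the drift/diffusion map with the two-sided Skorokhod map on $[a,b]$ and exploiting its sup-norm Lipschitz continuity --- is precisely the mechanism behind the cited \cite[Thm.~4.1]{Ta79}, and you correctly identify the absence of a closed-form two-sided regulator as the one non-routine ingredient; your truncation of the coefficients outside a neighbourhood of $[a,b]$ is legitimate (indeed simpler than in the one-sided case) because the reflected solution is confined to $[a,b]$, so that $\sigma_\cI=+\infty$. The only step you should tighten is uniqueness within $\cS\times\cS$: the flat-off conditions in $\textbf{SP}(a,b;x)$ evaluate the indicator at the left-continuous value $\XX^{x,\nu,\xi}_t$, so a jump of $\nu$ at a time when $\XX^{x,\nu,\xi}_t=a$ (pushing the process to $a+\Delta\nu_t\in[a,b]$) is not literally excluded by those conditions, and your parenthetical claim that the candidates are ``automatically continuous'' does not follow from the uniqueness of the (continuous) deterministic Skorokhod problem alone. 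You need a short separate argument ruling out such boundary jumps, or you must take continuity of the regulators as part of the definition of the reflection problem, as is implicitly done in \cite{Ta79} and in the paper.
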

For future frequent use we also recall the one-sided version of the above result.
\begin{lemma}
\label{lem:Sk01}
Let Assumption \ref{ass:coef} hold. For any $a\in\cI$, $x\geq a$ and $\xi\in\cS$ there exists a unique $\nu^a\in\cS$ that solves the Skorokhod reflection problem $\textbf{SP}^{\,\xi}_{a+}(x)$ defined by 
\begin{align}
\tag{$\textbf{SP}^\xi_{a+}(x)$}
\hspace{-10pt}
\text{find $\nu\in\cS$ s.t.}
\left\{
\begin{array}{l}
\XX^{x,\nu,\xi}_t\in[a,\overline{x}), \text{$\PP$-a.s.~for $0< t\le\sigma_\cI $},\\[+6pt]
\int^{T\wedge\sigma_\cI}_0{\mathds{1}_{\{\XX^{x,\nu,\xi}_t>a\}}d\nu_t}=0, \text{$\PP$-a.s.~for any $T>0$}.
\end{array}
\right.
\end{align}
Similarly, for any $b\in\cI$, $x\leq b$ and $\nu\in\cS$ there exists a unique $\xi^b\in\cS$ that solves the Skorokhod reflection problem $\textbf{SP}^{\,\nu}_{b-}(x)$ defined by
\begin{align}
\tag{$\textbf{SP}^\nu_{b-}(x)$}
\hspace{-10pt}
\text{find $\xi\in\cS$ s.t.}
\left\{
\begin{array}{l}
\XX^{x,\nu,\xi}_t\in(\underline{x},b], \text{$\PP$-a.s.~for $0< t\le\sigma_\cI $},\\[+6pt]
\int^{T\wedge\sigma_\cI}_0{\mathds{1}_{\{\XX^{x,\nu,\xi}_t<b\}}d\xi_t}=0, \text{$\PP$-a.s.~for any $T>0$.}
\end{array}
\right.
\end{align}
\end{lemma}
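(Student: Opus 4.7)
The plan is to prove existence and uniqueness of $\nu^a\in\cS$ solving $\textbf{SP}^\xi_{a+}(x)$; the case of $\xi^b$ solving $\textbf{SP}^\nu_{b-}(x)$ then follows by symmetry (apply the argument to $-\XX$, swap the roles of $\nu$ and $\xi$, and replace $\mu(\cdot),\sigma(\cdot)$ by $-\mu(-\cdot),\sigma(-\cdot)$). The strategy is to recast the problem as a pathwise fixed-point equation via the classical one-sided Skorokhod map, and then solve that equation by Picard iteration on localized time-state windows, in the spirit of the Tanaka-type construction \cite[Thm.~4.1]{Ta79} already invoked for Lemma \ref{lem:Sk}; the prescribed input $\xi\in\cS$ enters merely as an exogenous bounded-variation drift.

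First I would observe that, if $\nu\in\cS$ satisfies the reflection conditions, then setting
$$Z^\nu_t := x + \int_0^t \mu(\XX^{x,\nu,\xi}_s)\,ds + \int_0^t \sigma(\XX^{x,\nu,\xi}_s)\,d\WW_s - \xi_t,\qquad t\in[0,\sigma_\cI],$$
one has $\XX^{x,\nu,\xi}_t = Z^\nu_t + \nu_t$. The constraint $\XX^{x,\nu,\xi}_t\ge a$, the monotonicity and left-continuity imposed by $\cS$, together with the complementarity condition $\int_0^{\cdot}\mathds{1}_{\{\XX>a\}}\,d\nu=0$, then force $\nu$ to be the one-sided Skorokhod reflector at level $a$, namely $\nu_t=\sup_{s<t}(a-Z^\nu_s)^+$. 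This formula remains unambiguous at jumps of $\xi$: if $\xi$ jumps by $\Delta\xi_t$ with $\XX^{x,\nu,\xi}_t-\Delta\xi_t<a$, it automatically prescribes $\Delta\nu_t = a-(\XX^{x,\nu,\xi}_t-\Delta\xi_t)$. Since $Z^\nu$ depends on $\nu$ through $\XX^{x,\nu,\xi}$, this is a genuine fixed-point equation for $(\nu,\XX^{x,\nu,\xi})$.

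To solve it I would localize. Pick $b_m\in\cI$ with $b_m\nearrow\overline x$, set $\tau_m:=\inf\{t\ge 0:\XX^{x,\nu,\xi}_t\ge b_m\}\wedge m$, and use that $\mu,\sigma$ are Lipschitz on $[a,b_m]$ thanks to Assumption \ref{ass:coef}. Starting from $\nu^{(0)}\equiv 0$, iteratively solve the unreflected SDE \eqref{state:X} driven by $(\nu^{(n)},\xi)$ to obtain $\XX^{(n)}$, put $Z^{(n)}:=\XX^{(n)}-\nu^{(n)}$, and define $\nu^{(n+1)}_t := \sup_{s<t}(a-Z^{(n)}_s)^+$. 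Using the $1$-Lipschitz property of the reflector map $f\mapsto\sup_{s\le\cdot}(a-f(s))^+$ in supremum norm, combined with the Burkholder-Davis-Gundy inequality and Gronwall's lemma, one obtains, for $t$ small enough, a contraction estimate of the form
$$\EE\Big[\sup_{s\le t}|\nu^{(n+1)}_s-\nu^{(n)}_s|^2\Big]\le \alpha\,\EE\Big[\sup_{s\le t}|\nu^{(n)}_s-\nu^{(n-1)}_s|^2\Big],\qquad \alpha<1.$$
Concatenation across short intervals delivers a unique fixed point on $[0,\tau_m]$; letting $m\to\infty$, Assumption \ref{ass:boundary} together with the observations that $\nu^a$ is activated only at $a<\overline x$ and that $\xi$ can only push the trajectory downward prevents $\XX^{x,\nu^a,\xi}$ from reaching $\overline x$ before $\sigma_\cI$, whence $\tau_m\uparrow\sigma_\cI$ and $\nu^a\in\cS$ is constructed globally. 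Disjointness of supports in the jump case is handled automatically by the Skorokhod-map formula.

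The main obstacle I anticipate is the joint handling of jumps of $\xi$ and the reflector $\nu$, together with the fact that $\mu,\sigma\in C^1$ are only locally Lipschitz. The first is circumvented by the left-continuity built into $\cS$ and the strict inequality $s<t$ in the supremum defining the reflector, which matches exactly the framework of \cite{Ta79}; the second is precisely what motivates the localization at $b_m\nearrow\overline x$. Uniqueness globally then follows from uniqueness on each $[0,\tau_m]$ and the fact that two solutions must agree up to the common exit time from any such window.
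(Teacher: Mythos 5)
Your proposal follows essentially the same route as the paper's proof: both recast the problem as a pathwise fixed-point equation for the one-sided Skorokhod reflector $\nu_t=\sup_{s}\bigl(a-Z^{\nu}_s\bigr)^+$, solve it by Picard iteration (the paper derives factorial-decay estimates and concludes via Chebyshev and Borel--Cantelli rather than a small-time contraction plus concatenation, a cosmetic difference), and then remove the Lipschitz restriction by localisation near $\overline{x}$ (the paper truncates the coefficients at levels $x_n\uparrow\overline{x}$ rather than stopping the process, again an equivalent device). The one step your sketch asserts rather than proves is that the localising times exhaust $[0,\sigma_\cI]$, i.e.\ that the process reflected upward at $a$ cannot reach $\overline{x}$ in finite time: Assumption \ref{ass:boundary} only concerns the \emph{uncontrolled} process, and the paper devotes a separate result (Lemma \ref{lem:refl}) to this point, computing $\EE_x\bigl[e^{-r\theta_y}\bigr]$ for the reflected diffusion via the fundamental solutions of $(\cL_{\XX}-r)u=0$ with a Neumann condition at $a$ and showing it vanishes as $y\uparrow\overline{x}$. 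Your reduction to $\xi\equiv 0$ by the comparison $\XX^{x,\nu^a,\xi}\le\XX^{x,\nu^a,0}$ is correct and matches the paper, but the remaining non-explosion claim for the reflected process is genuine (if standard) work rather than an immediate consequence of the assumption.
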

The proof of the above lemma is based on a Picard iteration scheme. Although this derivation seems to be standard we could not find a precise reference for our particular setting, and we provide a short proof in Appendix \ref{app-proofs}.

\subsection{The game of controls}\label{sec:contr}

We introduce a 2-player nonzero-sum game of singular control, where player 1 (resp.~player 2) can influence the dynamics \eqref{state:X} by exerting the control $\nu$ (resp.~$\xi$). The game has the following structure: if player 1 uses a unit of control at time $t>0$, she must pay a cost $G_1(\XX^{\nu,\xi}_t)$, while player 2 receives a reward $L_2(\XX^{\nu,\xi}_t)$. A symmetric situation occurs if player 2 exerts control. Both players want to maximise their own expected discounted reward functional $\Psi_i$ defined by 
\begin{align}
\label{eq:Psi1}\Psi_1(x;\nu,\xi):=\EE\Big[\int_0^{\sigma_\cI}e^{-rt}L_1(\XX^{x,\nu,\xi}_t)\,{\scriptstyle{\ominus}}\, d\xi_t-\int_0^{\sigma_\cI}e^{-rt}G_1(\XX^{x,\nu,\xi}_t)\,{\scriptstyle{\oplus}}\, d\nu_t\Big],\\
\label{eq:Psi2}\Psi_2(x;\nu,\xi):=\EE\Big[\int_0^{\sigma_\cI}e^{-rt}L_2(\XX^{x,\nu,\xi}_t)\,{\scriptstyle{\oplus}}\, d\nu_t-\int_0^{\sigma_\cI}e^{-rt}G_2(\XX^{x,\nu,\xi}_t)\,{\scriptstyle{\ominus}}\, d\xi_t\Big],
\end{align}
where $r>0$ is the discount rate and the integrals are defined below. 

To avoid dealing with controls producing infinite payoffs, we restrict our attention to the couples $(\nu,\xi)\in\cS\times\cS$ for which 
\begin{align}
\label{eq:Ic1}\EE&\Big[\int_0^{\sigma_\cI}e^{-rt}|L_1(\XX^{x,\nu,\xi}_t)|\,{\scriptstyle{\ominus}}\, d\xi_t+\int_0^{\sigma_\cI}e^{-rt}|G_1(\XX^{x,\nu,\xi}_t)|\,{\scriptstyle{\oplus}}\, d\nu_t\Big]<+\infty,\\
\label{eq:Ic2}\EE&\Big[\int_0^{\sigma_\cI}e^{-rt}|L_2(\XX^{x,\nu,\xi}_t)|\,{\scriptstyle{\oplus}}\, d\nu_t+\int_0^{\sigma_\cI}e^{-rt}|G_2(\XX^{x,\nu,\xi}_t)|\,{\scriptstyle{\ominus}}\, d\xi_t\Big]<+\infty.
\end{align}
We denote the space of such couples by $\cS^\circ\times\cS^\circ$.

A definition of the integrals with respect to the controls in presence of state dependent costs requires some attention because simultaneous jumps of $\xi$ and $\nu$ may be difficult to handle. An extended discussion on this matter is provided in Appendix \ref{app-setM}. Here we consider the class of admissible strategies (see Remark \ref{rem:strat} below)
\begin{align}
\label{def:admc}
\cM:=\{(\nu,\xi) \in \mathcal{S}^\circ \times \mathcal{S}^\circ\,:\,\PP_x(\Delta\nu_t\cdot\Delta\xi_t>0)=0\:\,\,\text{for all $t\ge 0$ and $x\in\cI$}\}.
\end{align}

Following \cite{Zhu92} (see also \cite{KZ15,LZ11} among others) we define the discounted costs of controls by
\begin{align}
\label{eq:int1}&\int_0^{T}e^{-rt}g(\XX^{x,\nu,\xi}_t)\,{\scriptstyle{\ominus}}\, d\xi_t=\int_0^{T}e^{-rt}g(\XX^{x,\nu,\xi}_t)d\xi^c_t+\sum_{t < T}e^{-rt} \int^{\Delta\xi_t}_0 g(\XX^{x,\nu,\xi}_t-z)dz\,,\\
\label{eq:int2}&\int_0^{T}e^{-rt}g(\XX^{x,\nu,\xi}_t)\,{\scriptstyle{\oplus}}\, d\nu_t=\int_0^{T}e^{-rt}g(\XX^{x,\nu,\xi}_t)d\nu^c_t+\sum_{t < T}e^{-rt} \int^{\Delta\nu_t}_0 g(\XX^{x,\nu,\xi}_t+z)dz\,,
\end{align}
for $T>0$, $(\nu,\xi)\in\cM$, and for any function $g$ such that the integrals are well defined.

Throughout the paper we take functions $G_i$ and $L_i$ satisfying
\begin{ass}\label{ass:GL}
$G_i,\,L_i:\overline{\cI}\to\RR\cup\{\pm\infty\}$, with $L_i<G_i$ on $\cI$ and with $G_i\in C^1(\cI)$ and $L_i\in C(\cI)$. Moreover the following asymptotic behaviours hold
$$\limsup_{x\to\underline{x}}\Big|\frac{G_i}{\phi}\Big|(x)=0\quad \text{and}\quad \limsup_{x\to\overline{x}}\Big|\frac{G_i}{\psi}\Big|(x)=0.$$
\end{ass}
Nash equilibria for the game are defined in the following way.
\begin{definition}\label{def:Nash-c}
For $x\in\cI$ we say that a couple $(\nu^*,\xi^*)\in\cM$ is a Nash equilibrium if and only if
\begin{align*}
\big|\Psi_i(x;\nu^*,\xi^*)\big|<+\infty,\quad i=1,2,
\end{align*}
and
\begin{align}
\left\{
\begin{array}{ll}
\Psi_1(x;\nu^*,\xi^*)\ge \Psi_1(x;\nu,\xi^*) & \text{for any $\nu\in\cS$ s.t.~$(\nu,\xi^*)\in\cM$},\\
\Psi_2(x;\nu^*,\xi^*)\ge \Psi_2(x;\nu^*,\xi) & \text{for any $\xi\in\cS$ s.t.~$(\nu^*,\xi)\in\cM$}.
\end{array}
\right.
\end{align}
We also say that $V_i(x):=\Psi_i(x;\nu^*,\xi^*)$ is the value of the game for the $i$-th player relative to the equilibrium.
\end{definition}

\begin{remark}
\label{rem:strat}
In several problems of interest for applications, the functionals \eqref{eq:Psi1} and \eqref{eq:Psi2} may be rewritten as the sum of three terms: an integral in time of a state dependent running profit, plus two integrals with respect to the controls, with constant instantaneous costs (see, e.g.,  \cite{DeAFe14}, \cite{GP05} and \cite{MZ07} for similar functionals in the case of single agent optimisation problems). In such cases, the condition in \eqref{def:admc} relative to jumps of the admissible strategies is not needed. In fact, we show in Appendix \ref{app-setM} that if at least one player picks a control that reflects the process at a fixed boundary (i.e.~solving one of the problems in Lemma \ref{lem:Sk01}), then the other player has no incentives in picking strategies outside of the class $\cM$. 
\end{remark}

\begin{remark}\label{rem:Ic}
It is worth noticing that, given $a,b\in\cI$ with $a<b$, the couple of controls $(\nu^a,\xi^b)$ which solves $\textbf{SP}(a,b;x)$ belongs to $\cM$. In fact, one can easily check that $(\nu^a,\xi^b)$ satisfy \eqref{eq:Ic1} and \eqref{eq:Ic2}, for example by looking at the proof of Lemma 2.1 in \cite{SLG84}. Moreover, by construction we have $\PP_x(\Delta\nu^a_t\cdot\Delta\xi^b_t>0)=0$ for all $t\ge0$. 
\end{remark}

\begin{remark}
Nash equilibria could in principle exist in broader sets than $\cM$. However this fact does not \emph{per se} add useful information. In fact, unless some additional optimality criterion is introduced (for example maximisation of the total profit of the two players), it is often impossible to rank multiple equilibria according to the players' individual preferences. In this paper we content ourselves with equilibria in $\cM$, as these lead to explicit solutions and to the desired connection between OS and SSC. 
\end{remark}

\subsection{The game of stopping}

In this section we introduce a 2-player nonzero-sum game of stopping where the underlying process is $X^x$ as in \eqref{state:XX}. This is the game which we show is linked to the game of controls introduced in the previous section. 

Denote by $\mathcal{T}$ the set of $\FF$-stopping times. The $i$-th player chooses $\tau_i\in\mathcal{T}$ with the aim of minimising an expected cost functional $\cJ_i(\tau_1,\tau_2;x)$, and the game ends at $\tau_1\wedge\tau_2$. This game has payoffs of immediate stopping given by the functions $G_i$ and $L_i$ appearing in the functionals \eqref{eq:Psi1} and \eqref{eq:Psi2} of the game of control. More precisely we set
\begin{align}
\label{functional0}&\mathcal{J}_1(\tau_1,\tau_2;x):=\EE\Big[e^{-\int^{\tau_1}_0(r-\mu'(X^x_s))ds}G_1(X^x_{\tau_1})\mathds{1}_{\{\tau_1 < \tau_2\}} + e^{-\int^{\tau_2}_0(r-\mu'(X^x_s))ds}L_1(X^x_{\tau_2})\mathds{1}_{\{\tau_1 \geq \tau_2\}}\Big],\\
\label{functional0b}&\mathcal{J}_2(\tau_1,\tau_2;x):=\EE\Big[e^{-\int^{\tau_2}_0(r-\mu'(X^x_s))ds}G_2(X^x_{\tau_2})\mathds{1}_{\{\tau_2 \le \tau_1\}} + e^{-\int^{\tau_1}_0(r-\mu'(X^x_s))ds}L_2(X^x_{\tau_1})\mathds{1}_{\{\tau_2 > \tau_1\}}\Big].
\end{align}

As in the case of the game of controls, also here we introduce the notion of Nash equilibrium.
\begin{definition}
\label{def:Nash}
For $x \in \cI$ we say that a couple $(\tau^*_1,\tau^*_2) \in \mathcal{T} \times \mathcal{T}$ is a Nash equilibrium 
if and only if
\begin{align*}
\big|\cJ_i(\tau^*_1,\tau^*_2;x)\big|<+\infty,\quad i=1,2
\end{align*}
and
\begin{equation}
\label{Nashequilibrium}
\left\{
\begin{array}{ll}
\mathcal{J}_1(\tau_1^*,\tau_2^*;x) \leq \mathcal{J}_1(\tau_1,\tau_2^*;x), \quad \forall\,\tau_1 \in \mathcal{T}, \\[+5pt]
\mathcal{J}_2(\tau_1^*,\tau_2^*;x) \leq \mathcal{J}_2(\tau_1^*,\tau_2;x), \quad \forall\,\tau_2 \in \mathcal{T}.
\end{array}
\right.
\end{equation}
We also say that $v_i(x):=\mathcal{J}_i(\tau_1^*,\tau_2^*;x)$ is the value of the game for the $i$-th player relative to the equilibrium.
\end{definition}

Our choice for the game of stopping is motivated by an heuristic argument which is well known in the economic literature on irreversible (partially reversible) investment problems. We briefly illustrate the main ideas below.

In our game of controls, both players are faced with the question of \emph{how} to use their control in order to maximise an expected payoff. This might be interpreted as the problem of two investors who must decide \emph{how to invest} a unit of capital in order to maximise their future expected profits. In mathematical economics literature (see, e.g., \cite{DP}) the question is known to be equivalent to the one of \emph{timing} the investment of one unit of capital. The equivalence can be formally explained via an analysis of marginal costs and benefits for each investor.

Here we take the point of view of player $1$, but symmetric arguments can be applied to player $2$. Given an investment strategy $\nu$, player $1$ pays a marginal cost equal to $G_1$ per unit of investment. However, the upward shift in the controlled dynamics (due to $\nu$) modifies the current level of the state variable, and therefore also the player's expected future profit. Such a change in the expected future payoffs, per unit of invested capital, represents the marginal benefit for player $1$. As long as the marginal benefit is smaller than the marginal cost, then player $1$ should \emph{wait} and do nothing. On the contrary, at times when the marginal benefit equals or exceeds the marginal cost, it is clear that player $1$ should invest (at the optimum the marginal benefit is never strictly larger than the marginal cost). In this sense, player 1 is \emph{timing} the decision to incur a (marginal) cost $G_1$, in exchange for expected future profits. This explains the (random) payoff $G_1(X_{\tau_1})$ in \eqref{functional0}--\eqref{functional0b}, while the indicator $\mathds{1}_{\{\tau_1 < \tau_2\}}$ is due to the fact that the previous argument holds until the second player decides to invest. In particular, while player $1$ waits for her optimal time $\tau_1$ to invest, it may happen that player $2$ decides to invest first. This situation produces a marginal cost for player $1$ equal to $L_1$ (which here may be negative or positive), and explains the role of the (random) payoff $L_1(X_{\tau_2})\mathds{1}_{\{\tau_1 \ge \tau_2\}}$ in \eqref{functional0}--\eqref{functional0b}.  

Since investors try to minimise costs, we are naturally led to consider minimisation of the players' expected discounted marginal costs \eqref{functional0}--\eqref{functional0b}. The specific discount factor adopted here is due to the nature of the underlying controlled diffusion, and it is a technical point which will become clear in the analysis below.

\section{The main result}
\label{sec:main}

Here we prove the key result of the paper (Theorem \ref{thm:labomba}), i.e.~a differential link between the value functions $v_i$, $i=1,2$ relative to Nash equilibria in the game of stopping and the value functions $V_i$, $i=1,2$ relative to Nash equilibria in the game of control. 
The result holds when the equilibrium stopping times for $X$ are hitting times to suitable thresholds so that the related optimally controlled $\XX$ is reflected at such thresholds. 

Theorem \ref{thm:labomba} relies on assumptions regarding the existence of a Nash equilibrium in the game of stopping and suitable properties of the associated values $v_1$ and $v_2$. It was shown in \cite{DeAFeMo15} that such requirements hold in a broad class of examples, and we will summarise results of \cite{DeAFeMo15} in Proposition \ref{prop:summ} below, for completeness.

For a given connected set $\mathcal{O}\subseteq\cI$, in the theorem below we will make use of the Sobolev space $W^{2,\infty}_{loc}(\mathcal{O})$. This is the space of functions which are twice differentiable in the weak sense on $\mathcal{O}$, and whose weak derivatives up to order two are functions in $L^\infty_{loc}(\mathcal{O})$. We will also use that if $u\in W^{2,\infty}_{loc}(\mathcal{O})$, then $u\in C^1(\mathcal{O})$ by Sobolev embedding \cite[Ch.~9, Cor.~9.15]{Br}.

\begin{theorem}\label{thm:labomba}
Suppose there exist $a_*,b_*$ with $\underline{x}<a_*<b_*<\overline{x}$ such that the following conditions hold: 
\begin{itemize}
\item[(a)] The stopping times
\begin{align}
\tau^*_1:=\inf\{t>0\,:\,X^x_t\leq a_*\},\qquad \tau^*_2:=\inf\{t>0\,:\,X^x_t\geq b_*\}
\end{align}
form a Nash equilibrium for the game of stopping as in Definition \ref{def:Nash};
\item[(b)] The value functions $v_i(x):=\cJ_i(\tau^*_1,\tau^*_2;x)$, $i=1,2$ are such that $v_i\in C(\cI)$, $i=1,2$ with $v_1\in W^{2,\infty}_{loc}(\underline{x},b_*)$ and $v_2\in W^{2,\infty}_{loc}(a_*,\overline{x})$ ; 
\item[(c)] $v_1=G_1$ in $(\underline{x},a_*]$, $v_1=L_1$ in $[b_*,\overline{x})$ and $v_2=G_2$ in $[b_*,\overline{x})$, $v_2=L_2$ in $(\underline{x},a_*]$. Moreover they solve the boundary value problem
\begin{align}
\label{bv1}&\big(\cL_Xv_i-(r-\mu')v_i\big)(x) =0, & a_*<x<b_*, \:i=1,2\\[+3pt]
\label{bv2}&\big(\cL_Xv_1-(r-\mu')v_1\big)(x) \ge 0, & \underline{x}<x\le a_*\\[+3pt]
\label{bv3}&\big(\cL_Xv_2-(r-\mu')v_2\big)(x) \ge 0, & b_*\le x<\overline{x}\\[+3pt]
\label{bv4}&v_i\le G_i,& x\in\cI,\:i=1,2.
\end{align}
\end{itemize} 

Then, the strategy profile that prescribes to reflect $\widetilde{X}$ at the two barriers $a^*$ and $b^*$ (up to a possible initial jump) forms a Nash equilibrium for the game of control (cf.\ Definition \ref{def:Nash-c}). In particular, for $x \in \cI$ and $t\geq 0$, such an equilibrium is realised by the couple of controls 
\begin{align}\label{optC}
\nu^*_t := \mathds{1}_{\{t>0\}}\big[(a_*-x)^+ + \nu^{a_*}_t\big], \qquad \xi^*_t := \mathds{1}_{\{t>0\}}\big[(x-b_*)^+ + \xi^{b_*}_t\big],
\end{align}
where $(\nu^{a_*},\xi^{b_*})$ uniquely solves Problem $\textbf{SP}(a_*,b_*; (x \vee a_*)\wedge b_*)$. Finally, the value functions $V_i(x)=\Psi_i(x;\nu^{*},\xi^{*})$, $i=1,2$ are given by
\begin{align}
V_1(x) & = \kappa_1 +\int^x_{a_*}v_1(z)dz, \qquad x\in\cI,\\
V_2(x) & = \kappa_2 +\int^{b_*}_{x}v_2(z)dz, \qquad x\in\cI,
\end{align}
with
\begin{align}\label{c1c2}
\kappa_1:=\frac{1}{r}\Big(\frac{\sigma^2}{2}\, G'_1+\mu\, G_1\Big)(a_*),\qquad \kappa_2:=-\frac{1}{r}\Big(\frac{\sigma^2}{2}\, G'_2+\mu\, G_2\Big)(b_*).
\end{align}  
\end{theorem}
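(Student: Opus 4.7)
The plan is to run a verification argument: define the candidate value functions $V_1, V_2$ exactly as in the statement, show they satisfy an appropriate HJB variational system for the control game (with the opponent's strategy fixed at $\xi^*$, resp.\ $\nu^*$), and then apply a suitable Itô-type expansion to deduce both optimality against unilateral deviations and equality under the prescribed equilibrium.

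\textbf{Step 1: regularity and HJB identities for the candidate.} From \eqref{bv4} and the continuity of $v_1$ across $a_*$ with $v_1 = G_1$ on $(\underline{x}, a_*]$, smooth fit holds at $a_*$, i.e.\ $v_1'(a_*+) = G_1'(a_*)$. Hence $V_1 \in C^1(\cI)$ with $V_1' = v_1$, and $V_1'' \in L^\infty_{loc}(\underline{x}, b_*)$. A direct computation, using the definitions of $\cL_X$ and $\cL_{\XX}$, gives
\begin{equation*}
\frac{d}{dx}\bigl[\cL_{\XX} V_1(x) - rV_1(x)\bigr] = \cL_X v_1(x) - (r - \mu'(x))\, v_1(x),
\end{equation*}
which vanishes on $(a_*, b_*)$ by \eqref{bv1}. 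Therefore $\cL_{\XX} V_1 - rV_1$ is constant on $(a_*,b_*)$; evaluating at $a_*^+$ and invoking smooth fit, this constant equals $\tfrac12 \sigma^2(a_*) G_1'(a_*) + \mu(a_*) G_1(a_*) - r\kappa_1$, which is precisely zero for the $\kappa_1$ chosen in \eqref{c1c2}. The same computation on $(\underline{x}, a_*)$, where $V_1' = G_1$, together with \eqref{bv2}, yields $\cL_{\XX}V_1 - rV_1 \le 0$. Altogether $V_1$ satisfies $V_1' \le G_1$ on $\cI$, with $V_1' = G_1$ on $(\underline{x}, a_*]$ and $V_1' = L_1$ on $[b_*,\overline{x})$, plus $\cL_{\XX}V_1 - rV_1 = 0$ on $(a_*,b_*)$ and $\cL_{\XX}V_1 - rV_1 \le 0$ on $(\underline{x}, a_*)$; entirely symmetric identities hold for $V_2$.

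\textbf{Step 2: verification and equality.} For any $\nu \in \cS$ with $(\nu,\xi^*) \in \cM$, I apply a Meyer-Itô change of variable to $t\mapsto e^{-rt} V_1(\XX^{x,\nu,\xi^*}_t)$ up to a localising stopping time $\sigma_n \uparrow \sigma_\cI$. Since $\xi^*$ keeps $\XX^{x,\nu,\xi^*}$ below $b_*$ for $t>0$, $V_1$ is only evaluated where it has $W^{2,\infty}_{loc}$ regularity. Splitting $\nu$ and $\xi^*$ into continuous and jump parts, and using that $(\nu,\xi^*) \in \cM$ rules out simultaneous jumps, the jump contributions $V_1(\XX_{s+}) - V_1(\XX_s)$ telescope into integrals of $V_1'$ over the jump intervals and combine with the continuous pieces to produce exactly the $\oplus$- and $\ominus$-integrals of \eqref{eq:int1}--\eqref{eq:int2}. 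Taking expectations and applying Step 1 I then (i) discard the nonpositive running term $(\cL_{\XX} V_1 - rV_1)(\XX_t)$, (ii) bound $V_1'(\XX_t)\oplus d\nu_t$ from above by $G_1(\XX_t)\oplus d\nu_t$ using $V_1' \le G_1$, and (iii) identify $V_1'(\XX_t)\ominus d\xi^*_t$ with $L_1(\XX_t)\ominus d\xi^*_t$, since the support of $d\xi^*$ lies in $\{\XX = b_*\}\cup\{0\}$ where $V_1' = L_1$. After passing to the limit $T\to\infty$, $n\to\infty$, this yields $V_1(x) \ge \Psi_1(x;\nu,\xi^*)$. Choosing $\nu = \nu^*$ makes $d\nu^*$ supported on $\{\XX = a_*\}\cup\{0\}$ where $V_1' = G_1$, turning (ii) into an equality, and simultaneously $\XX$ is confined to $[a_*, b_*]$ so that (i) is an equality as well. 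Hence $V_1(x) = \Psi_1(x;\nu^*,\xi^*)$, and the argument for player 2 is entirely symmetric.

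\textbf{Main obstacle.} The most delicate points are (a) justifying the passage to the limit $\EE\bigl[e^{-r(T\wedge\sigma_n)}V_1(\XX_{T\wedge\sigma_n})\bigr]\to 0$, which relies on the admissibility conditions \eqref{eq:Ic1}--\eqref{eq:Ic2}, the asymptotic behaviour of $G_i/\phi$ and $G_i/\psi$ in Assumption \ref{ass:GL}, and the unattainability of $\underline{x}, \overline{x}$ from Assumption \ref{ass:boundary}; and (b) applying the Meyer-Itô formula to $V_1$ whose second weak derivative is only $L^\infty_{loc}$, which is handled either by mollification or by the standard Itô-Tanaka formula for functions with locally absolutely continuous first derivative. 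The whole argument crucially exploits the structural fact that the equilibrium supports $\mathrm{supp}\,d\nu^* \subset \{\XX = a_*\}$ and $\mathrm{supp}\,d\xi^* \subset \{\XX = b_*\}$ match the free boundaries of the stopping game, so that $V_1' = v_1$ evaluated on those sets produces exactly $G_1$ and $L_1$.
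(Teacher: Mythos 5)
Your Steps 1 and 2 reproduce, in substance, the paper's own proof: the paper establishes the same variational system for the candidate functions (doing the computation by integrating by parts against the ODE for $v_i$ rather than by differentiating $\cL_{\XX}V_1-rV_1$, which is equivalent and uses the same smooth-fit input $v_1'(a_*)=G_1'(a_*)$ coming from the $W^{2,\infty}_{loc}$, hence $C^1$, regularity in hypothesis (b), not from \eqref{bv4} alone), and then runs exactly the It\^o--Meyer verification you describe, with the class $\cM$ used to rewrite the jump terms as the $\oplus/\ominus$ integrals and the supports of $d\nu^*,d\xi^*$ matching the regions where $V_i'$ equals $G_i$ or $L_i$.

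The genuine gap is your ``main obstacle (a)'', which you state but do not prove, and which is not a routine consequence of the assumptions you list. Two points are missing. First, the localisation cannot be an arbitrary sequence $\sigma_n\uparrow\sigma_\cI$ for the actual process $\XX^{x,\nu,\xi^*}$: to bound the boundary term uniformly in the deviation $\nu$ one must localise with hitting times of the process driven by the fixed reflecting control alone (for player 1, $\eta_y=\inf\{t>0:\XX^{x,0,\xi^*}_t\le y\}$, using $\XX^{x,\nu,\xi^*}_t\ge\XX^{x,0,\xi^*}_t$ and $\le b_*$), and one only needs, and in general only gets, $\liminf_y \EE_x[e^{-r\eta_y}V_1(\XX_{\eta_y})]\ge 0$ for arbitrary deviations; convergence to $0$ holds only for the equilibrium pair, where the process is confined to $[a_*,b_*]$. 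Second, and more importantly, the decay $\EE_x[e^{-r\eta_y}]\big(1+\int_y^{a_*}|G_1(z)|dz\big)\to 0$ does not follow directly from Assumption \ref{ass:GL}: that assumption controls $G_i$ against the fundamental solutions $\psi,\phi$ of the killed equation $\cL_X u-(r-\mu')u=0$ for $X$, whereas the Laplace transform of $\eta_y$ (resp.\ $\theta_y$) is expressed through the fundamental solutions $\widetilde\psi,\widetilde\phi$ of $(\cL_{\XX}-r)u=0$ with a reflecting (Neumann) condition at the fixed barrier. Bridging the two is the content of the paper's Lemma \ref{lem:refl}: one writes $\widetilde\psi'=\alpha\psi+\beta\phi$, $\widetilde\phi'=\gamma\psi+\delta\phi$, proves that the relevant coefficient is strictly positive using the monotonicity of $y\mapsto\EE_x[e^{-r\theta_y}]$ and the reflecting boundary condition, and only then applies de l'H\^opital together with Assumption \ref{ass:GL}; the integrability conditions \eqref{eq:Ic1}--\eqref{eq:Ic2} enter separately, via dominated convergence for the control integrals. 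Without this argument (or an equivalent one) the transversality step, and hence the verification, is incomplete.
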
 
\begin{proof}
The proof is by direct check and it is performed in two steps.
\vspace{+4pt}

\emph{Step $1$.} The functions
\begin{align}
\label{u1} u_1(x) & = \kappa_1 +\int^x_{a_*}v_1(z)dz, \qquad x\in\cI,\\
\label{u2} u_2(x) & = \kappa_2 +\int^{b_*}_{x}v_2(z)dz, \qquad x\in\cI,
\end{align}
with $\kappa_1$ and $\kappa_2$ as in \eqref{c1c2}, are $C^1$ on $\cI$ (by continuity of $G_i$ and $L_i$ on ${\cI}$) with $u_1\in C^2(\underline{x},b_*)$ since $v_1\in C^1(\underline{x},b_*)$, and $u_2\in C^2(a_*,\overline{x})$ since $v_2\in C^1(a_*,\overline{x})$. We now show that $u_1$, $u_2$ and the boundaries $a_*$, $b_*$ solve the system of coupled variational problems
\begin{align}\label{HJB1}
\left\{
\begin{array}{ll}
(\cL_{\XX}u_1-ru_1)(x)=0, & x\in(a_*,b_*)\\[+4pt]
(\cL_{\XX}u_1-ru_1)(x)\le 0, & x\in(\underline{x},b_*)\\[+4pt]
u'_1(x)\le G_1(x), & x\in(\underline{x},b_*)\\[+4pt]
u'_1(x)=G_1(x), & x\in(\underline{x},a_*)\\[+4pt]
u'_1(x)=L_1(x), & x\in(b_*,\overline{x})
\end{array}
\right.
\end{align}
and 
\begin{align}\label{HJB2}
\left\{
\begin{array}{ll}
(\cL_{\XX}u_2-ru_2)(x)=0, & x\in(a_*,b_*)\\[+4pt]
(\cL_{\XX}u_2-ru_2)(x)\le 0, & x\in(a_*,\overline{x})\\[+4pt]
u'_2(x)\ge -G_2(x), & x\in(a_*,\overline{x})\\[+4pt]
u'_2(x)=-G_2(x), & x\in(b_*,\overline{x})\\[+4pt]
u'_2(x)=-L_2(x), & x\in(\underline{x},a_*).
\end{array}
\right.
\end{align}

We will only give details about the derivation of \eqref{HJB2} as the ones for \eqref{HJB1} are analogous. The last three properties in \eqref{HJB2} follow by observing that $u'_2=-v_2$, and by using $v_2=G_2$ in $[b_*,\overline{x})$, $v_2=L_2$ in $(\underline{x},a_*]$, and \eqref{bv4} (cf.~$(c)$ in the statement of the theorem). To prove the first equation in \eqref{HJB2} we use the definition of $u_2$ (see \eqref{u2}) and explicit calculations to get
\begin{align}\label{eq:ab}
(\cL_{\XX}u_2-ru_2)(x)=-\frac{\sigma^2(x)}{2}v_2'(x)-\mu(x)v_2(x)-r\kappa_2-\int_{x}^{b_*} r v_2(z)dz.
\end{align}
Then we also use \eqref{bv1} to obtain that, for $x\in(a_*,b_*)$, 
\begin{align}\label{eq:r}
\int_x^{b_*} r v_2(z)dz=\int_x^{b_*}\big(\cL_Xv_2(z)+\mu'(z)v_2(z)\big)dz.
\end{align}  
Integrating by parts the right hand-side of \eqref{eq:r}, using $v_2(b_*)=G_2(b_*)$ and $v'_2(b_*)=G'_2(b_*)$, and substituting the result back into \eqref{eq:ab}, the right-hand side of \eqref{eq:ab} equals zero upon recalling the definition of $\kappa_2$ (see \eqref{c1c2}). Finally, to prove the second line in \eqref{HJB2} it is enough to notice that for $x\in[b_*,\overline{x})$ 
\begin{align}
\int_x^{b_*} r v_2(z)dz\ge \int_x^{b_*}\big(\cL_{X}v_2(z)+\mu'(z)v_2(z)\big)dz,
\end{align}  
by \eqref{bv3} and then argue as before.
\vspace{+4pt}

\emph{Step $2$.} We now proceed to a verification argument to show that $u_i=V_i$, $i=1,2$, and that the strategy profile \eqref{optC} forms a Nash equilibrium. We provide again full details only for $u_2$ as the proof follows in the same way for $u_1$. 

Recall the dynamics for $\XX^{\nu,\xi}$ from \eqref{state:X}, and notice that by definition \eqref{optC}, the couple of controls $(\nu^*,\xi^*)$ solves the Skorokhod reflection problem in $[a_*,b_*]$, up to an initial jump. Moreover, Remark \ref{rem:Ic} guarantees that $(\nu^*,\xi^*)\in\cM$.

First we show that $u_2\ge \Psi_2(x;\nu^*,\xi)$ for any admissible $\xi$. Take $\xi \in \mathcal{S}^\circ$ such that $(\nu^*,\xi) \in \mathcal{M}$. It is important to notice that $\nu^*$ in \eqref{optC} involves the control $\nu^{a^*}$ that solves $\textbf{SP}^{\xi}_{a^*+}(x\vee a_*)$ of Lemma \ref{lem:Sk01}, for an arbitrary $\xi$. Recalling that that $u_2\in C^2(a_*,\overline{x})$, we can apply It\^o-Meyer's formula, up to a localising sequence of stopping times, to the process $u_2(\XX^{x,\nu^*,\xi})$ (in particular we use that $\PP_x(\Delta\nu^*_t\cdot\Delta\xi_t>0)=0$ for all $t\ge0$). The integral with respect to the continuous part of the bounded variation process $\nu^*-\xi$ is the difference of the integrals with respect to $d\nu^{*,c}$ and $d\xi^{c}$. For $x\in\cI$ we obtain
\begin{align}\label{eq:ver1}
u_2(x)=&e^{-r \theta_y}u_2(\XX^{x,\nu^*,\xi}_{\theta_y})-\int_0^{\theta_y}e^{-rs}(\cL_{\XX}-r)u_2(\XX^{x,\nu^*,\xi}_s)ds-M_{\theta_y}\nonumber\\
&-\int_0^{\theta_y}e^{-rs}u'_2(\XX^{x,\nu^*,\xi}_s)d\nu^{*,c}_s+\int_0^{\theta_y}e^{-rs}u'_2(\XX^{x,\nu^*,\xi}_s)d\xi^{c}_s\\
&-\sum_{s<\theta_y}e^{-rs}\big(u_2(\XX^{x,\nu^*,\xi}_{s+})-u_2(\XX^{x,\nu^*,\xi}_{s})\big),\nonumber
\end{align}
where $M$ is 
\begin{align}
M_t:=\int_0^te^{-rs}\sigma(\XX^{x,\nu^*,\xi}_s)u'_2(\XX^{x,\nu^*,\xi}_s)d\WW_s,
\end{align}
and $\theta_y$ is the stopping time
\begin{align}
\label{stop:local}
\theta_y:=\inf\{u>0\,:\,\XX^{x,\nu^*,0}_{u} \ge y\},\quad \text{for}\,y > b_*.
\end{align}
Notice that for any $t \in (0,\theta_y]$ we have $a_* \leq \XX^{x,\nu^*,\xi}_{t} \leq \XX^{x,\nu^*,0}_{t} \leq y$, hence continuity of $\sigma$ and of $u'_2$ imply that $(M_t)_{t\leq\theta_y}$ is a martingale.

Since $(\nu^*,\xi)\in\cM$, the process $\XX^{x,\nu^*,\xi}$ is left-continuous and we have
\begin{align}\label{jump}
& \sum_{s<\theta_y}e^{-rs}\big(u_2(\XX^{x,\nu^*,\xi}_{s+})-u_2(\XX^{x,\nu^*,\xi}_{s})\big) \\
& = \sum_{s<\theta_y}e^{-rs}\big(u_2(\XX^{x,\nu^*,\xi}_{s+})-u_2(\XX^{x,\nu^*,\xi}_{s})\big)\big[\mathds{1}_{\{\Delta \nu^*_s > 0\}} + \mathds{1}_{\{\Delta \xi_s > 0\}}\big],\nonumber
\end{align}
where
\begin{align*}
& \sum_{s<\theta_y}e^{-rs}\big(u_2(\XX^{x,\nu^*,\xi}_{s+})-u_2(\XX^{x,\nu^*,\xi}_{s})\big)\mathds{1}_{\{\Delta \nu^*_s > 0\}} = \sum_{s<\theta_y}e^{-rs}\int_0^{\Delta\nu^*_s} u'_2(\XX^{x,\nu^*,\xi}_{s}+z)dz, \nonumber \\
& \sum_{s<\theta_y}e^{-rs}\big(u_2(\XX^{x,\nu^*,\xi}_{s+})-u_2(\XX^{x,\nu^*,\xi}_{s})\big)\mathds{1}_{\{\Delta \xi_s > 0\}} = - \sum_{s<\theta_y}e^{-rs}\int_0^{\Delta \xi_s} u'_2(\XX^{x,\nu^*,\xi}_{s}-z)dz.
\end{align*}
Hence \eqref{eq:ver1} may be written in a more compact form as (cf.\ \eqref{eq:int1}, \eqref{eq:int2})
\begin{align}\label{eq:ver2}
u_2(x)=&e^{-r \theta_y}u_2(\XX^{x,\nu^*,\xi}_{\theta_y})-\int_0^{\theta_y}e^{-rs}(\cL_{\XX}-r)u_2(\XX^{x,\nu^*,\xi}_s)ds-M_{\theta_y}\nonumber\\
&-\int_0^{\theta_y}e^{-rs}u'_2(\XX^{x,\nu^*,\xi}_s)\,{\scriptstyle\oplus}\,d\nu^{*}_s+\int_0^{\theta_y}e^{-rs}u'_2(\XX^{x,\nu^*,\xi}_s)\,{\scriptstyle\ominus}\,d\xi_s.
\end{align}
Now, we notice that the third and fifth formulae in \eqref{HJB2} imply that $u'_2\ge -G_2$ on $\cI$ and that $u'_2(\XX^{x,\nu^*,\xi}_s)=-L_2(\XX^{x,\nu^*,\xi}_s)$ for all $s$ in the support of $d\nu^*_s$ (i.e.~for all $s\ge 0$ s.t.~$\XX^{x,\nu^*,\xi}_s\le a_*$). Moreover, employing the second expression in \eqref{HJB2} jointly with the fact that $\XX^{x,\nu^*,\xi}_s\ge a_*$ for $s > 0$, we get 
\begin{align}\label{eq:ver3}
u_2(x)\ge&e^{-r \theta_y}u_2(\XX^{x,\nu^*,\xi}_{\theta_y})-M_{\theta_y}\nonumber\\
&+\int_0^{\theta_y}e^{-rs}L_2(\XX^{x,\nu^*,\xi}_s)\,{\scriptstyle\oplus}\,d\nu^{*}_s-\int_0^{\theta_y}e^{-rs}G_2(\XX^{x,\nu^*,\xi}_s)\,{\scriptstyle\ominus}\,d\xi_s.
\end{align}
By taking expectations we end up with
\begin{align}\label{eq:ver4}
u_2(x)\ge&\EE_x \Big[ e^{-r \theta_y}u_2(\XX^{\nu^*,\xi}_{\theta_y})+\int_0^{\theta_y}e^{-rs}L_2(\XX^{\nu^*,\xi}_s)\,{\scriptstyle\oplus}\,d\nu^{*}_s-\int_0^{\theta_y}e^{-rs}G_2(\XX^{\nu^*,\xi}_s)\,{\scriptstyle\ominus}\,d\xi_s\Big].
\end{align}

We aim at taking limits as $y\to\overline{x}$ in \eqref{eq:ver4}, and we preliminarily notice that $\theta_y\uparrow\sigma_{\mathcal{I}}$ as $y\to\overline{x}$, $\PP_x$-a.s.

\begin{itemize}
\item[ (i)] By \eqref{u2} it is easy to see that
\begin{align*}
\label{stima1}
|u_2(\XX^{\nu^*,\xi}_{\theta_y})| \leq  & \kappa_2 + \int_{a_*}^{b_*}|v_2(z)| dz + \int_{b_*}^{b_* \vee \XX^{\nu^*,\xi}_{\theta_y}}|G_2(z)| dz \nonumber \\
\leq & C_2 + \int_{b_*}^{b_* \vee \XX^{\nu^*,0}_{\theta_y}}|G_2(z)| dz \leq C_2 + \int_{b_*}^{y}|G_2(z)| dz, \nonumber
\end{align*}
for some $C_2 > 0$, and where we have used $v_2 = G_2$ on $[b_*, \overline{x})$ and $\XX^{\nu^*,\xi}_{\theta_y} \leq \XX^{\nu^*,0}_{\theta_y}\le y$ $\PP_x$-a.s. Hence we have
\begin{equation}
\label{stima2}
\EE_x \big[ e^{-r \theta_y}u_2(\XX^{\nu^*,\xi}_{\theta_y})\big] \geq -\EE_x\big[ e^{-r \theta_y}\big]\Big(C_2 + \int_{b_*}^{y}|G_2(z)| dz\Big).
\end{equation}
Using Assumption \ref{ass:boundary}, Lemma \ref{lem:refl} in appendix guarantees
\begin{equation}
\label{stima3}
\limsup_{y \uparrow \overline{x}} \EE_x\big[ e^{-r \theta_y}\big] \Big(C_2 + \int_{b_*}^{y}|G_2(z)| dz\Big) \leq 0,
\end{equation}
so that \eqref{stima2} yields
\begin{equation}
\label{stima4}
\liminf_{y \uparrow \overline{x}}\EE_x \big[ e^{-r \theta_y}u_2(\XX^{\nu^*,\xi}_{\theta_y})\big] \geq 0.
\end{equation}

\item[(ii)] 
Recall the integrability conditions \eqref{eq:Ic1} and \eqref{eq:Ic2} in the definition of $\cM$. Then, using that $\theta_y\uparrow\sigma_\cI$ as $y\uparrow\infty$, and applying the dominated convergence theorem, we obtain 
\begin{align*}
& \lim_{y\to\overline{x}}\EE_x\Big[\int_0^{\theta_y}e^{-rs}L_2(\XX^{\nu^*,\xi}_s)\,{\scriptstyle\oplus}\,d\nu^{*}_s-\int_0^{\theta_y}e^{-rs}G_2(\XX^{\nu^*,\xi}_s)\,{\scriptstyle\ominus}\,d\xi_s\Big] \nonumber \\
& = \EE_x\Big[\int_0^{\sigma_{\mathcal{I}}}e^{-rs}L_2(\XX^{\nu^*,\xi}_s)\,{\scriptstyle\oplus}\,d\nu^{*}_s-\int_0^{\sigma_{\mathcal{I}}}e^{-rs}G_2(\XX^{\nu^*,\xi}_s)\,{\scriptstyle\ominus}\,d\xi_s\Big].
\end{align*}
\end{itemize}

Finally, we combine items (i) and (ii) and take limits in \eqref{eq:ver4} as $y\to\overline{x}$ to get 
\begin{align}\label{eq:ver5}
u_2(x)\ge&\EE_x \Big[\int_0^{\sigma_{\mathcal{I}}}e^{-rs}L_2(\XX^{\nu^*,\xi}_s)\,{\scriptstyle\oplus}\, d\nu^{*}_s-\int_0^{\sigma_{\mathcal{I}}}e^{-rs}
G_2(\XX^{\nu^*,\xi}_s)\,{\scriptstyle\ominus}\, d\xi_s\Big].
\end{align}
Hence $u_2(x)\ge \Psi_2(x;\nu^*,\xi)$ for any $\xi\in\cS$ such that $(\nu^*,\xi)\in\cM$.

Now, repeating the steps above with $\xi=\xi^*$, the inequalities in \eqref{eq:ver3} and \eqref{eq:ver4} become strict equalities due to the fact that $\XX^{x,\nu^*,\xi^*}_t\in[a_*,b_*]$ for all $t>0$ and $u'_2(\XX^{x,\nu^*,\xi^*}_t)=-G_2(\XX^{x,\nu^*,\xi^*}_t)$ on $\text{supp}\{d\xi^*_t\}$. 
Moreover the process $u_2(\XX^{\nu^*,\xi^*})$ is bounded, so that passing to the limit as $y\to\overline{x}$ gives
\[
\lim_{y\uparrow \overline x}\EE_x\left[e^{-r\theta_y}u_2(\XX^{\nu^*,\xi^*}_{\theta_y})\right]=0
\]
by dominated convergence and Assumption \ref{ass:boundary}. Hence $u_2(x)=\Psi(x;\nu^*,\xi^*)=V_2(x)$.
\end{proof}

\begin{remark}
From the game-theoretic point of view, Nash equilibria of Theorem \ref{thm:labomba} above are Markov perfect \cite{MT01} (also called Nash equilibria in closed-loop strategies), i.e.\ equilibria in which players' actions only depend on the ``payoff-relevant'' state variable $\widetilde{X}$. Our result provides a simple construction of closed-loop Nash equilibria for specific continuous time stochastic games of singular control. Since this problem is yet to be solved in game theory in its full generality (see the discussion in Section 2 of \cite{BP09} and in \cite{Steg10}), our work contributes to fill this gap.
\end{remark}


\subsection{On the assumptions of Theorem \ref{thm:labomba}.}
\label{ass:thmbomba}

In this section we give sufficient conditions under which $a_*$ and $b_*$ as in Theorem \ref{thm:labomba} exist.
Moreover, in Remark \ref{rem:bds} we provide algebraic equations for $a_*$ and $b_*$ which can be solved at least numerically.
Recall $\phi$ and $\psi$, i.e.~the fundamental decreasing and increasing solutions to \eqref{ODE}, and recall that $r>\mu'(x)$ for $x\in\overline{\cI}$ by Asssumption \ref{ass:rate}. 
We need the following set of functions:
\begin{definition}\label{def:sets}
Let $\cA$ be the class of real valued functions $H \in C^2(\cI)$ such that
\begin{align}
\label{lims}&\limsup_{x\to\underline{x}}\Big|\frac{H}{\phi}\Big|(x)=0,\:\:\:\:\limsup_{x\to\overline{x}}\Big|\frac{H}{\psi}\Big|(x)=0\\[+4pt]
\label{lims2}&\quad\text{and}\quad\EE_x\bigg[\int_0^{\sigma_\cI} e^{-\int_0^t(r-\mu'(X_s))ds} \big|h(X_t)\big|dt\bigg] < \infty
\end{align}
for all $x\in\cI$, and with $h(x):= (\cL_X H - (r-\mu')H)(x)$. We denote by $\cA_1$ (respectively $\cA_2$) the set of all $H \in \cA$ such that $h$ is strictly positive (resp.\ negative) on $(\underline{x},x_h)$ and strictly negative (resp.\ positive) on $(x_h,\overline{x})$, for some $x_h \in \cI$ with $\liminf_{x \to \underline x}h(x)>0$ (resp.\ $\limsup_{x\to\underline{x}}h(x)<0$) and $\limsup_{x \to \overline x}h(x)<0$ (resp.\ $\liminf_{x\to\overline{x}}h(x)>0$).
\end{definition}
We also need the following assumption, which will hold in the rest of this section.
\begin{ass}\label{ass:GL2}
For $i=1,2$, it holds $G_i\in\cA_i$ and
\begin{align}\label{ass:lim}
\limsup_{x\to\underline{x}}\Big|\frac{L_i}{\phi}\Big|(x)<+\infty\:\:\:\:\text{and}\:\:\:\:
\limsup_{x\to\overline{x}}\Big|\frac{L_i}{\psi}\Big|(x)<+\infty.
\end{align}
Moreover, letting $\hat{x}_1$ and $\hat{x}_2$ in $\cI$ be such that 
\begin{align}
\label{k1}\{x:(\cL_XG_1-(r-\mu')G_1)(x)>0\}=(\underline{x},\hat{x}_1),\\
\label{k2}\{x:(\cL_XG_2-(r-\mu')G_2)(x)>0\}=(\hat{x}_2,\overline{x}),
\end{align}
we assume $\hat{x}_1<\hat{x}_2$.
\end{ass}
The above condition $\hat{x}_1<\hat{x}_2$ implies that, for any value of the process $X$, at least one player has a running benefit from waiting (see the introduction of \cite{DeAFeMo15}).

The proof of the next proposition is given in Appendix \ref{app-proofs}. In its statement we denote 
\begin{align}
\vartheta_i(x):=\frac{G_i'(x)\phi(x)-G_i(x)\phi'(x)}{w\,S'(x)},\qquad i=1,2,
\end{align}
with $w>0$ as in \eqref{Wronskian}. We also remark that the proposition holds under all the standing assumptions made so far in the paper (i.e.~Assumptions \ref{ass:coef}, \ref{ass:boundary}, \ref{ass:rate}, \ref{ass:GL} and \ref{ass:GL2}). For the reader's convenience we also recall that $\underline x$ and $\overline x$ are natural for $X$ if the process cannot start from $\underline x$ and $\overline x$ and, moreover, when started in $(\underline x,\overline x)$ cannot reach $\underline x$ or $\overline x$ in finite time. On the other hand, $\underline x$ is entrance-not-exit if the process can be started from $\underline x$, but if started from $x>\underline x$ it cannot reach $\underline x$ in finite time. We refer to pp.\ 14--15 in \cite{BS} for further details.

\begin{prop}
\label{prop:summ}
Each one of the conditions below is sufficient for the existence of $a_*$ and $b_*$ fulfilling $(a)$, $(b)$ and $(c)$ of Theorem \ref{thm:labomba}:
\begin{enumerate}
\item $\underline{x}$ and $\overline{x}$ are natural boundaries for $(X_t)_{t\ge 0}$.
\item $\underline{x}$ is an entrance-not-exit boundary and $\overline{x}$ is a natural boundary for $(X_t)_{t\ge 0}$; moreover the following hold
\begin{itemize}
\item[(2.i)] $\vartheta_1(\underline{x}+):=\lim_{x\downarrow\underline{x}}\vartheta_1(x)<(L_1/\psi)(x^\infty_2)$,
where $x^\infty_2$ uniquely solves $\vartheta_2(x)=(G_2/\psi)(x)$ in $(\hat{x}_2,\overline{x})$;
\item[(2.ii)] $\sup\{x>\underline{x}\,:\,L_1(x)=\vartheta_1(\underline{x}+)\psi(x)\}\le\hat{x}_2$;
\item[(2.iii)] $\lim_{x\uparrow\overline{x}}(L_1/\phi)(x)>-\infty$.
\end{itemize}
\end{enumerate}
\end{prop}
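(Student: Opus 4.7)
The plan is to reduce Proposition \ref{prop:summ} to the existence theorems for Nash equilibria in nonzero-sum stopping games proved in \cite{DeAFeMo15}, specialised to the discount structure $e^{-\int_0^\cdot(r-\mu'(X_s))ds}$ and to the diffusion $X$ of \eqref{state:XX}. Conditions (b) and (c) of Theorem \ref{thm:labomba} are exactly the regularity and variational properties satisfied by the candidate value functions of a two-threshold Dynkin-type equilibrium, so once a pair $(a_*,b_*)$ is constructed and the associated hitting times are shown to form an equilibrium for the stopping game, the remaining claims follow by direct inspection.

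To construct $(a_*,b_*)$ I would look for candidate value functions of the form
\begin{equation*}
v_1(x)=\begin{cases} G_1(x), & x\in(\underline{x},a_*], \\ A_1\psi(x)+B_1\phi(x), & x\in(a_*,b_*), \\ L_1(x), & x\in[b_*,\overline{x}), \end{cases}
\end{equation*}
and analogously for $v_2$ (with the roles of $G_i$ and $L_i$ swapped), these being the unique $C^2$ candidates in the continuation region solving $\cL_X v_i-(r-\mu')v_i=0$. Imposing smooth fit at the player's own threshold ($v_1\in C^1$ at $a_*$, $v_2\in C^1$ at $b_*$) together with continuous fit at the opponent's threshold ($v_1(b_*)=L_1(b_*)$ and $v_2(a_*)=L_2(a_*)$) determines $A_i,B_i$ and reduces the existence problem to a coupled system of two scalar equations in $(a_*,b_*)$. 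Using the Wronskian identity \eqref{Wronskian}, the smooth-fit conditions rewrite naturally in terms of the quantities $\vartheta_i$, which explains their appearance in the statement.

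I would then invoke the analysis of \cite{DeAFeMo15} to solve this coupled system. The sign structure of $h_i:=\cL_X G_i-(r-\mu')G_i$ built into $G_i\in\cA_i$, combined with the asymptotic bounds on $L_i/\phi, L_i/\psi$ in \eqref{ass:lim}, yields via intermediate-value arguments and monotonicity of the ratios $G_i/\phi, G_i/\psi$ a pair $(a_*,b_*)$ with $\underline{x}<a_*\le\hat{x}_1<\hat{x}_2\le b_*<\overline{x}$; the assumption $\hat{x}_1<\hat{x}_2$ is precisely what forces $a_*<b_*$. Under case 1 both boundaries are natural, so $\vartheta_1(\underline{x}+)=0=\vartheta_2(\overline{x}-)$ and the construction matches directly the setting of \cite{DeAFeMo15}. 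Under case 2 the limit $\vartheta_1(\underline{x}+)$ may be strictly positive because $\phi$ need not diverge at the entrance-not-exit boundary $\underline{x}$: the extra conditions (2.i)--(2.iii) control this defect by ensuring respectively that $\vartheta_1(\underline{x}+)$ lies strictly below $L_1/\psi$ at the relevant upper threshold for player~$2$, that the map $x\mapsto L_1(x)-\vartheta_1(\underline{x}+)\psi(x)$ has no spurious zero above $\hat{x}_2$, and that $L_1/\phi$ stays bounded below at $\overline{x}$. Once $(a_*,b_*)$ is produced, regularity (b) is immediate from the explicit form of $v_i$ and the $C^1$ smoothness of $G_i, L_i$; the variational inequalities (c) reduce to the sign conditions on $h_i$ encoded in $\cA_i$; and the Nash property (a) is the standard Dynkin-type verification argument combining It\^o's formula applied to $v_i(X^x)$ with (c) and the integrability in \eqref{lims2}.

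The main obstacle is the existence argument under case 2: when $\underline{x}$ is merely entrance-not-exit, the fixed-point equation that determines the lower threshold $a_*$ can fail to admit a solution in $\cI$ unless $\vartheta_1(\underline{x}+)$ is strictly dominated by a quantity linked to the opposite player's equilibrium boundary. Conditions (2.i)--(2.iii) are precisely the minimal requirements to close this argument, and the delicate part is to verify that they remain compatible with the coupling between the two thresholds, i.e.\ that the algebraic system can be solved simultaneously and not merely sequentially.
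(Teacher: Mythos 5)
Your proposal is correct and takes essentially the same route as the paper: the paper's proof simply defers to \cite{DeAFeMo15} (Theorem 3.2 for case 1, Proposition 3.12 for case 2, with Appendix A.3 there handling the state-dependent discount $r-\mu'$), which is exactly the reduction you describe, and your sketch of the smooth-fit/continuous-fit construction and the role of (2.i)--(2.iii) matches the mechanics of those cited results. The only extra content in the paper's proof is a short change-of-variables argument ($y=(\psi/\phi)(x)$) establishing that $x^\infty_2$ is the unique root of $\vartheta_2=(G_2/\psi)$ in $(\hat{x}_2,\overline{x})$, a detail your write-up takes for granted.
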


\begin{remark}
\label{rem:bds}
An important byproduct of our connection between nonzero-sum games of control and nonzero-sum games of stopping is that the equilibrium thresholds $a_*$ and $b_*$ of Theorem \ref{thm:labomba} are a solution to a system of algebraic equations which can be computed at least numerically. In the terminology of singular control theory, these equations correspond to the smooth-fit conditions $V''_1(a_*+)=G'_1(a_*)$ and $V''_2(b_*-)=-G'_2(b_*)$, and were obtained via a geometric constructive approach in \cite{DeAFeMo15} (see Theorem 3.2 therein). We recall the system here for completeness
\begin{align}\label{syst}
\left\{
\begin{array}{c}
\displaystyle \frac{G_1}{\phi}(a_*)-\frac{L_1}{\phi}(b_*)-\vartheta_1(a_*)\Big(\frac{\psi}{\phi}(a_*)-\frac{\psi}{\phi}(b_*)\Big)=0\,,\\[+10pt]
\displaystyle \frac{G_2}{\phi}(b_*)-\frac{L_2}{\phi}(a_*)-\vartheta_1(b_*)\Big(\frac{\psi}{\phi}(b_*)-\frac{\psi}{\phi}(a_*)\Big)=0\,,
\end{array}
\right.
\end{align}
where $a_*<\hat{x}_1$ and $b_*>\hat{x}_2$.

Uniqueness of the solution to \eqref{syst} is discussed in \cite[Thm.~3.8]{DeAFeMo15}.
\end{remark}


\section{A game of pollution control}
\label{sec:pollution}

In order to understand the nature of our Assumptions \ref{ass:GL} and \ref{ass:GL2}, and illustrate an application of our results, we present here a game version of a pollution control problem. 

A social planner wants to keep the level of pollution low while the productive sector of the economy (modeled as a single representative firm) wants to increase the production capacity. If we assume that the pollution level is proportional to the firm's production capacity (see for example \cite{JZ01,VdPdZ}), then the problem translates into a game of capacity expansion. Indeed, the representative firm aims at maximising profits by investing to increase the production level, whereas the social planner aims at keeping the pollution level under control through environmental regulations which effectively cap the maximum production rate.  

For the production capacity we consider a controlled geometric Brownian motion as in \cite{CH09,DeAFe14,GP05}, amongst others,
\begin{align}
d\XX^{\nu,\xi}_t=\hat \mu\XX^{\nu,\xi}_t dt+\hat \sigma \XX^{\nu,\xi}_t d\WW_t+d\nu_t-d\xi_t,\quad\XX^{\nu,\xi}_0=x\in\RR_+,
\end{align}
for some $\hat \mu\in\RR$ and $\hat \sigma>0$. The firm has running operating profit $\pi(x)$, which is $C^1$ and strictly concave, and a positive cost per unit of investment $\alpha_1(x)$. The social planner has an instantaneous utility function $u(x)$ which is $C^1$, decreasing and strictly concave\footnote{The social planner's utility decreases with increasing pollution levels. Moreover, if the pollution is high the marginal benefit from decreasing it is large, whereas if the pollution is low a further contraction of the economy has very little or no benefit.}.
Since imposing a reduction of production might also have some negative impact on social welfare (e.g., it might cause an increase in the level of unemployment), we introduce a positive `cost' (in terms of the expected total utility) associated to the social planner's policies and we denote it by $\alpha_2(x)$. For simplicity here we assume $\alpha_i(x)\equiv\alpha_i>0$, $i=1,2$, and the objective functionals for the firm, denoted by $\Psi_1$, and the social planner, denoted by $\Psi_2$, are given by
\begin{align}
\label{eq:Psi1b}&\Psi_1(x;\nu,\xi):=\EE_x\Big[\int^{\sigma_{\mathcal{I}}}_0e^{-r t}\pi(\XX^{\nu,\xi}_t)dt-\alpha_1\int_0^{\sigma_{\mathcal{I}}}e^{-rt} d\nu_t\Big],\\
\label{eq:Psi2b}&\Psi_2(x;\nu,\xi):=\EE_x\Big[\int^{\sigma_{\mathcal{I}}}_0e^{-r t}u(\XX^{\nu,\xi}_t)dt-\alpha_2\int_0^{\sigma_{\mathcal{I}}}e^{-rt} d\xi_t\Big].
\end{align}
Both players want to maximise their respective functional by picking admissible strategies from $\cM$.
As explained in Lemma \ref{lem:jumps1} below, in this context there is no loss of generality for our scopes in considering $\cM$ rather than $\cS^\circ\times\cS^\circ$. 

The game with functionals \eqref{eq:Psi1b}--\eqref{eq:Psi2b} will be tackled directly with the same methods developed in the previous sections. Indeed, the additional running cost terms require only a minor tweak to our method. Motivated by the analysis of the previous sections we look at the game of stopping where two players want to minimise the cost functionals below:
\begin{align}
\label{eq:J1}&\widehat{\cJ}_1(\tau_1,\tau_2;x):=\EE_x\left[e^{-(r-\hat \mu)\tau_1}\alpha_1\mathds{1}_{\{\tau_1<\tau_2\}}+\int_0^{\tau_1\wedge\tau_2}e^{-(r-\hat \mu)t}\pi'(X_t)dt\right],\\
\label{eq:J2}&\widehat{\cJ}_2(\tau_1,\tau_2;x):=\EE_x\left[e^{-(r-\hat \mu)\tau_2}\alpha_2\mathds{1}_{\{\tau_2\le\tau_1\}}-\int_0^{\tau_1\wedge\tau_2}e^{-(r-\hat \mu)t}u'(X_t)dt\right],
\end{align} 
where the underlying process solves
\[
dX_t=(\hat \mu+\hat \sigma^2)X_tdt+\hat \sigma X_t d W_t,\quad \text{for $t>0$}, \qquad X_0=x>0.
\]

Theorem \ref{thm:labomba} holds in this setting and links the game of control \eqref{eq:Psi1b}--\eqref{eq:Psi2b} to the game of stopping \eqref{eq:J1}--\eqref{eq:J2}. In particular, in the statement of Theorem \ref{thm:labomba} we should now refer to the games in \eqref{eq:Psi1b}--\eqref{eq:J2} and replace \eqref{bv1}--\eqref{bv4} by 
\begin{align}
\label{bv1b}&\big(\cL_Xv_1-(r-\hat \mu)v_1\big)(x) =-\pi'(x), & a_*<x<b_*, \:i=1,2\\[+3pt]
\label{bv1c}&\big(\cL_Xv_2-(r-\hat \mu)v_2\big)(x) =u'(x), & a_*<x<b_*, \:i=1,2\\[+3pt]
\label{bv2b}&\big(\cL_Xv_1-(r-\hat \mu)v_1\big)(x) \ge -\pi'(x), & \underline{x}<x\le a_*\\[+3pt]
\label{bv3b}&\big(\cL_Xv_2-(r-\hat \mu)v_2\big)(x) \ge u'(x), & b_*\le x<\overline{x}\\[+3pt]
\label{bv4b}&v_i(x)\le \alpha_i,& x\in\cI,\:i=1,2.
\end{align}
Moreover, the constants $\kappa_i$ are adjusted as follows 
\begin{align}
\kappa_1:=\frac{1}{r}\Big(\hat\mu\, \alpha_1+\pi\Big)(a_*),\qquad \kappa_2:=-\frac{1}{r}\Big(\hat\mu\, \alpha_2-u\Big)(b_*).
\end{align}  
Everything else remains the same, including the proof of the theorem, which can be repeated by following the exact same steps.

We would like now to discuss sufficient conditions under which the game of stopping \eqref{eq:J1}--\eqref{eq:J2} admits a Nash equilibrium. In order to refer directly to the results for the stopping game from Section \ref{ass:thmbomba} it is convenient to rewrite \eqref{eq:J1}--\eqref{eq:J2} in the form of \eqref{functional0}--\eqref{functional0b}.

Here $\cI=\RR_+$ because $X$ is a geometric Brownian motion. For $r>\hat \mu$ we define functions $\Pi'$ and $U'$ via the ODEs
\begin{align}\label{D1}
(\cL_{X}-(r-\hat\mu))\Pi'(x)=-\pi'(x),\qquad (\cL_{X}-(r-\hat\mu))U'(x)=u'(x),
\end{align}
and by imposing growth conditions at zero and infinity. In particular, letting $\tau_n:=\inf\{t\ge0\,:\,X_t\notin(\tfrac{1}{n},n)\}$ we require that
\begin{align}
\label{D2}
\lim_{n\to \infty}\EE_x\left[e^{-(r-\hat\mu)\tau_n}\Pi'(X_{\tau_n})\right]=\lim_{n\to \infty}\EE_x\left[e^{-(r-\hat\mu)\tau_n}U'(X_{\tau_n})\right]=0.
\end{align}
A specific choice for $\pi$ and $u$ is discussed below, and for now we observe that by Dynkin formula and \eqref{D1} we get
\begin{align}
&\EE_x\left[\int_0^{\tau\wedge\tau_n}e^{-(r-\hat\mu)t}\pi'(X_t)dt\right]=\Pi'(x)-\EE_x\left[e^{-(r-\hat\mu)(\tau\wedge\tau_n)}\Pi'(X_{\tau\wedge\tau_n})\right]\\
&\EE_x\left[\int_0^{\tau\wedge\tau_n}e^{-(r-\hat\mu)t}u'(X_t)dt\right]=\EE_x\left[e^{-(r-\hat\mu)(\tau\wedge\tau_n)}U'(X_{\tau\wedge\tau_n})\right]-U'(x).
\end{align}
Letting $n\to\infty$ in the above expressions, using \eqref{D2} and plugging the result back in \eqref{eq:J1}--\eqref{eq:J2} we obtain the original formulation for $\cJ_1$ and $\cJ_2$ (cf.\ \eqref{functional0}--\eqref{functional0b}) by setting 
\begin{align*}
&G_1(x)=\alpha_1-\Pi'(x), & G_2(x)=\alpha_2-U'(x),\\
&L_1(x)=-\Pi'(x), & L_2(x)=-U'(x).
\end{align*}
It only remains to verify that it is possible to choose $\pi$ and $u$ such that Assumption \ref{ass:GL2} and condition \eqref{D2} hold. Hence, we can apply Proposition \ref{prop:summ}.

We now set
\begin{align}
&\zeta_1(x):=(\cL_{X}G_1-(r-\hat\mu)G_1)(x)=\pi'(x)-(r-\hat\mu)\alpha_1\\
&\zeta_2(x):=(\cL_{X}G_2-(r-\hat\mu)G_2)(x)=-u'(x)-(r-\hat\mu)\alpha_2,
\end{align}
and we notice that $\zeta_1$ is decreasing by concavity of $\pi$ whereas $\zeta_2$ is increasing by concavity of $u$. 
For instance assuming Inada conditions 
\begin{align*}
& \lim_{x\to\infty}\pi'(x)=0, & \lim_{x\to0}\pi'(x)=+\infty,\\
& \lim_{x\to\infty}u'(x)=-\infty, & \lim_{x\to 0}u'(x)=0,
\end{align*}
we have that \eqref{k1} and \eqref{k2} hold for some $\hat{x}_i$, $i=1,2$, which depend on the specific choice of $\pi$ and $u$. 

Let us now consider the case of $\pi(x)=x^\lambda$ and $u(x)=-x^\delta$ where $\lambda\in(0,1)$ and $\delta>1$. For $r>\hat{\mu}$ and sufficiently large we can guarantee \eqref{lims2} and \eqref{D2}. Moreover, denoting by $\gamma_1$ (resp.\ $\gamma_2$) the positive (resp.\ negative) root of the second order equation $\tfrac{1}{2}\hat{\sigma}^2\gamma(\gamma-1)+(\hat{\mu} + \hat{\sigma}^2)\gamma-(r-\hat{\mu})=0$, conditions \eqref{lims} on $G_1$ and $G_2$ are satisfied if $\lambda>\max\{0,\gamma_2+1\}$ and $1<\delta<1+\gamma_1$. Clearly \eqref{ass:lim} holds by the same arguments.
Finally, we have 
\begin{align*}
\hat{x}_1=\Big(\frac{r\alpha_1}{\lambda}\Big)^{-\frac{1}{1-\lambda}},\qquad\hat{x}_2=\Big(\frac{r\alpha_2}{\delta}\Big)^{\frac{1}{\delta-1}},
\end{align*}
so that a suitable choice of $\alpha_1$ and $\alpha_2$ ensures that $\hat{x}_1<\hat{x}_2$.


\appendix

\section{Appendix}

\renewcommand{\theequation}{A-\arabic{equation}}

\subsection{\textbf{Cost integrals and the set of strategies $\cM$}}
\label{app-setM}

It is well known in the singular stochastic control literature that state dependent instantaneous costs of control give rise to questions concerning the definition of integrals representing the cumulative cost of exercising control. 

Zhu in \cite{Zhu92} provided a definition consistent with the classical verification argument used in SSC for the solution to an HJB equation derived by the Dynamic Programming Principle. This definition has been adopted in several other papers concerning explicit solutions of SSC problems (see \cite{KZ15,LZ11} among others), and this is also the one that we use in our \eqref{eq:int1} and \eqref{eq:int2}. 
Another, perhaps more natural, possibility is instead to define the integral as a Riemann-Stieltjes' integral as for example it was done by Alvarez in \cite{Al00}. 

Despite this formal difference, it is remarkable that the two definitions for the cost of exercising control lead essentially to the same optimal strategies for problems of monotone follower type. In particular, it is possible to obtain Zhu's integral from the Riemann-Stieltjes' one by taking the limit as $n\to\infty$ of a sequence of controls that, at a given time $t$, make $n$ instantaneous jumps of length $h/n$ for a fixed $h>0$. The optimality of this behaviour is illustrated for example by Alvarez in Corollary 1 of \cite{Al00}, and it is often referred to as ``chattering policy''. The inconvenience with this approach is that the control obtained in the limit is not admissible in our $\cS$, and therefore optimisers can only be obtained in a larger class. 

Zhu's integral has proved to work very well in problems with monotone controls (representing for instance irreversible investments) or with controls of bounded variation (representing for instance partially reversible investment policies). In particular, the latter are often chosen in such a way that the controller's decision to invest/disinvest reflects the minimal decomposition of the control process (cf.\ \cite{DeAFe14}, \cite{FP13} and \cite{GZ15}, among others). In other words, investment and disinvestment do not occur at the same time, and this assumption is often justified by conditions on the absence of arbitrage opportunities. 

Here instead we have agents who use their controls independently, and it is unclear why \emph{a priori} they should decide not to contrast each other's moves by acting simultaneously. To elaborate more on this point and understand our choice of the set $\cM$, it is convenient to look at particular cases of our problem. 

In some instances, it is interesting to include in our functionals \eqref{eq:Psi1} and \eqref{eq:Psi2} a state-dependent running cost $\pi_i$ and use constant marginal costs/rewards of control $\alpha_i,\beta_i$ (see our example in Section \ref{sec:pollution} or problems studied in \cite{DeAFe14}, \cite{GP05} or \cite{MZ07}). The corresponding functionals read as follows
\begin{align}
\label{eq:Psi1c}\widehat{\Psi}_1(x;\nu,\xi):=\EE\Big[\int_0^{\sigma_{\mathcal{I}}}e^{-rt}\pi_1(\XX^{x,\nu,\xi}_t)dt+\int_0^{\sigma_{\mathcal{I}}}e^{-rt}\alpha_1\,d\xi_t-\int_0^{\sigma_{\mathcal{I}}}e^{-rt}\beta_1\,d\nu_t\Big],\\
\label{eq:Psi2c}\widehat{\Psi}_2(x;\nu,\xi):=\EE\Big[\int_0^{\sigma_{\mathcal{I}}}e^{-rt}\pi_2(\XX^{x,\nu,\xi}_t)dt+\int_0^{\sigma_{\mathcal{I}}}e^{-rt}\alpha_2\,d\nu_t-\int_0^{\sigma_{\mathcal{I}}}e^{-rt}\beta_2\,d\xi_t\Big].
\end{align}
In these cases the integrals with respect to the controls are simply understood as a Riemann-Stieltjes' integrals. For $\alpha_i<\beta_i$ we prove that, if one of the two players opts for a control that reflects the process at a threshold, then the other player's best response avoids simultaneous jumps of the controls. The condition $\alpha_i<\beta_i$ is the analogue in this context of the absence of arbitrage in papers like \cite{DeAFe14}, \cite{GP05} and \cite{MZ07}. The result is illustrated in the next lemma.
\begin{lemma}
\label{lem:jumps1}
Consider the game with functionals \eqref{eq:Psi1c}--\eqref{eq:Psi2c}. Let $a,b \in \cI$, recall Lemma \ref{lem:Sk01} and assume $\alpha_i<\beta_i$, $i=1,2$. If player 1 (resp.~player 2) chooses 
\begin{align}
\label{eq:tilde}
\text{$\tilde{\nu}^a_t:=\mathds{1}_{\{t>0\}}\big[(a-x)^+ + \nu^a_t\big]$ (resp.~$\tilde{\xi}^b:=\mathds{1}_{\{t>0\}}\big[(x-b)^+ + \xi^b_t\big]$)} 
\end{align}
where $\nu^a$ solves $\textbf{SP}^{\,\xi}_{a+}(x \vee a)$ (resp.~$\xi^b$ solves $\textbf{SP}^{\,\nu}_{b-}(x \wedge b)$) then the best reply
$\hat{\xi}_a:=\textrm{argmax}\,\widehat{\Psi}_2(x;\tilde{\nu}^a,\xi)$ is such that $(\tilde{\nu}^a,\hat{\xi}_a)\in\cM$ (resp.~$(\hat{\nu}_b,\tilde{\xi}^b)\in\cM$ with $\hat{\nu}_b:=\textrm{argmax}\,\widehat{\Psi}_1(x;\nu,\tilde{\xi}^b)$).
\end{lemma}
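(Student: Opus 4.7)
The plan is to argue by contradiction. Suppose $(\tilde{\nu}^a, \hat{\xi}_a) \notin \cM$, so that there exists $t \geq 0$ with $\PP_x(\Delta\tilde{\nu}^a_t \cdot \Delta\hat{\xi}_{a,t} > 0) > 0$. I will exhibit an admissible alternative $\bar\xi \in \cS^\circ$ for player~$2$ satisfying $\widehat{\Psi}_2(x; \bar\nu, \bar\xi) > \widehat{\Psi}_2(x; \tilde{\nu}^a, \hat{\xi}_a)$, where $\bar\nu$ is player~$1$'s response to $\bar\xi$, thereby contradicting the optimality of $\hat{\xi}_a$. The key geometric observation is that, since $\nu^a$ is the \emph{minimal} reflection of $\XX$ at $a$, a positive jump of $\tilde{\nu}^a$ at a time $t > 0$ can occur only in response to a jump of $\hat{\xi}_a$ that would otherwise push the state strictly below $a$. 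Indeed, denoting by $y \geq a$ the pre-jump state, one has
\begin{equation*}
\Delta\tilde{\nu}^a_t \;=\; \max\{0,\, a - (y - \Delta\hat{\xi}_{a,t})\} \;\leq\; \Delta\hat{\xi}_{a,t},
\end{equation*}
and an analogous identity holds at $t = 0$ (with $y = x \vee a$). In particular, the ``common jump'' $J_t := \min\{\Delta\tilde{\nu}^a_t, \Delta\hat{\xi}_{a,t}\}$ coincides with $\Delta\tilde{\nu}^a_t$ whenever both controls jump simultaneously.

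Set $K_t := \sum_{s < t} J_s$, an adapted, non-decreasing, left-continuous pure-jump process with $K_0 = 0$, and define
\begin{equation*}
\bar\xi_t := \hat{\xi}_{a,t} - K_t.
\end{equation*}
Since $\Delta K_s = J_s \le \Delta\hat{\xi}_{a,s}$, the process $\bar\xi$ is non-decreasing with $\bar\xi_0 = 0$, and the integrability conditions \eqref{eq:Ic1}--\eqref{eq:Ic2} are preserved, so $\bar\xi \in \cS^\circ$. Let $\bar\nu$ denote player~$1$'s reflecting response to $\bar\xi$ prescribed as in \eqref{eq:tilde}. I claim that pathwise
\begin{equation*}
\bar\nu_t = \tilde{\nu}^a_t - K_t \qquad\text{and}\qquad \XX^{x,\bar\nu,\bar\xi}_t = \XX^{x,\tilde{\nu}^a,\hat{\xi}_a}_t, \quad \PP_x\text{-a.s.,}\ t \ge 0.
\end{equation*}
The second identity is immediate from the first, because subtracting $K$ from both controls leaves their difference $\bar\nu - \bar\xi = \tilde{\nu}^a - \hat{\xi}_a$ unchanged. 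The first identity will follow from the uniqueness in Lemma~\ref{lem:Sk01}: the candidate $\tilde{\nu}^a - K$ is non-decreasing, keeps $\XX$ inside $[a, \overline{x})$ (since the state trajectory is unchanged), and inherits the Skorokhod flat-off condition $\int_0^T \mathds{1}_{\{\XX > a\}}\, d(\tilde{\nu}^a - K)_s = 0$ from that of $\nu^a$, as every jump of $K$ occurs at a time when $\nu^a$ is pushing $\XX$ back up to $a$.

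Because the state process (and hence the running profit $\pi_2(\XX)$) is pathwise unchanged, the payoff difference for player~$2$ collapses to the constant-cost control terms:
\begin{equation*}
\widehat{\Psi}_2(x; \bar\nu, \bar\xi) - \widehat{\Psi}_2(x; \tilde{\nu}^a, \hat{\xi}_a) = -\alpha_2\, \EE_x\!\Big[\int_0^{\sigma_{\cI}}\! e^{-rt}\, dK_t\Big] + \beta_2\, \EE_x\!\Big[\int_0^{\sigma_{\cI}}\! e^{-rt}\, dK_t\Big] = (\beta_2 - \alpha_2)\, \EE_x\!\Big[\int_0^{\sigma_{\cI}}\! e^{-rt}\, dK_t\Big],
\end{equation*}
which is strictly positive by $\alpha_2 < \beta_2$ and the assumption that $K$ is not $\PP_x$-a.s.\ identically zero. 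This contradicts the optimality of $\hat{\xi}_a$ and yields $(\tilde{\nu}^a, \hat{\xi}_a) \in \cM$. The symmetric statement for $(\hat{\nu}_b, \tilde{\xi}^b)$ follows by swapping the roles of the players and using $\alpha_1 < \beta_1$. The main technical obstacle will be the rigorous pathwise verification of the identity $\bar\nu = \tilde{\nu}^a - K$, which requires careful bookkeeping at jump times where $\Delta\hat{\xi}_a > \Delta\tilde{\nu}^a$ (so that $\bar\xi$ still has a residual jump $\Delta\hat{\xi}_a - \Delta\tilde{\nu}^a > 0$ that no longer triggers any reaction of $\bar\nu$) and at $t = 0$, where the initial deterministic jump $(a - x)^+$ of $\tilde{\nu}^a$ may interact with a possible initial jump of $\hat{\xi}_a$.
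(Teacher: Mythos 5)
Your proposal is correct and follows essentially the same strategy as the paper's proof: since $\nu^a$ is the minimal reflection, any simultaneous jump is a wasteful push--pull whose removal leaves the trajectory of $\XX$ (hence the running profit) unchanged while saving player~2 a strictly positive amount $(\beta_2-\alpha_2)$ per unit of cancelled control. The only difference is presentational: the paper reduces one simultaneous jump at a time (fixing $\omega$ and a single $t_0$, then noting the argument can be repeated), whereas you aggregate all common jumps into the single process $K$ and modify once, which handles countably many simultaneous jumps in one stroke and makes the appeal to uniqueness in Lemma~\ref{lem:Sk01} for the identity $\bar\nu=\tilde\nu^a-K$ slightly cleaner.
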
 
\begin{proof}
Let $x,a\in\cI$ and $\xi\in\cS^\circ$ (recall \eqref{eq:Ic1}--\eqref{eq:Ic2}) and consider $\nu^a$ solving $\textbf{SP}^{\,\xi}_{a+}(x \vee a)$. We want to perform a pathwise comparison of the cost functional for player 2 under two different controls. In particular, we fix $\omega\in\Omega$ and assume that there exists (a stopping time) $t_0=t_0(\omega)>0$ such that $\big(\Delta\tilde{\nu}^a_{t_0}\cdot\Delta\xi_{t_0}\big)(\omega)>0$. With no loss of generality we may assume that $\XX^{x,\tilde{\nu}^a,\xi}_{t_0}(\omega)>a$ and that the downward jump $\Delta\xi_{t_0}$ is trying to push the process below $a$, i.e.~
\begin{align}\label{eq:xinot}
\Delta\xi_{t_0}(\omega)>[\XX^{x,\tilde{\nu}^a,\xi}_{t_0}-a](\omega).
\end{align}
This push causes the immediate reaction of the control $\tilde{\nu}^a$ and therefore a simultaneous jump of the two controls. The case in which $\XX^{x,\tilde{\nu}^a,\xi}_{t_0}(\omega)\le a$ can be dealt with in the same way up to trivial changes.

We denote by $\xi^0$ a control in $\cS^\circ$ such that 
\begin{align*}
\xi^0_t(\omega)=\left\{\begin{array}{ll}
\xi_t(\omega)\,, & t\le t_0\\[+4pt]
\xi_t(\omega)-[\Delta\xi_{t_0}-(\XX^{x,\tilde{\nu}^a,\xi}_{t_0}-a)](\omega)\,, & t>t_0.
\end{array}
\right.
\end{align*}
In particular, $\xi^0(\omega)$ is the same as $\xi(\omega)$ but the jump size at $t_0(\omega)$ is reduced so that the process is not pushed below $a$. For $\nu^a$ solving $\textbf{SP}^{\xi^0}_{a+}(x \vee a)$ the jump at $t_0$ is not triggered. Therefore, $\XX^{x,\tilde{\nu}^a,\xi^0}_{t_0+}(\omega)=a$ due only to the downward push given by $\xi^0$. Now we observe that the (random) Borel measure $d\tilde{\nu}^a$, induced by $\tilde{\nu}^a$ in response to $\xi$, differs from the measure $d\tilde{\nu}^a$, induced by $\tilde{\nu}^a$ in response to $\xi^0$, only for a mass at $t_0$ (which is needed to compensate for the jump of $\xi$). Moreover, since $\nu^a$ solves $\textbf{SP}^\xi_{a+}(x \vee a)$ for any $\xi$, then $\XX^{x,\tilde{\nu}^a,\xi}_{t}(\omega)=\XX^{x,\tilde{\nu}^a,\xi^0}_{t}(\omega)$ for all $t>0$, since $\XX^{x,\tilde{\nu}^a,\xi}_{t_0+}(\omega)=\XX^{x,\tilde{\nu}^a,\xi^0}_{t_0+}(\omega)=a$ and nothing else has changed for $t\neq t_0$. 

It is now easy to see that the couple $(\tilde{\nu}^a,\xi)$ requires an additional cost for player 2 compared to the couple $(\tilde{\nu}^a,\xi^0)$ and therefore cannot be optimal. For the sake of clarity here we denote by $\nu^{a,\xi}$ the solution to $\textbf{SP}^{\xi}_{a+}(x \vee a)$ and by $\nu^{a,\xi^0}$ the solution to $\textbf{SP}^{\xi^0}_{a+}(x \vee a)$, and also we set $\tilde{\nu}^{a,\xi}$ and $\tilde{\nu}^{a,\xi^0}$ as in \eqref{eq:tilde}.

So we obtain
\begin{align*}
\int_0^{\sigma_{\mathcal{I}}}&e^{-rt}\pi_2(\XX^{x,\tilde{\nu}^{a,\xi},\xi}_t)dt+\int_0^{\sigma_{\mathcal{I}}}e^{-rt}\alpha_2\,d\tilde{\nu}^{a,\xi}_t-\int_0^{\sigma_{\mathcal{I}}}e^{-rt}\beta_2\,d\xi_t\\
=&\int_0^{\sigma_{\mathcal{I}}}e^{-rt}\pi_2(\XX^{x,\tilde{\nu}^{a,\xi^0},\xi^0}_t)dt+\int_0^{\sigma_{\mathcal{I}}}e^{-rt}\alpha_2\,d\tilde{\nu}^{a,\xi^0}_t-\int_0^{\sigma_{\mathcal{I}}}e^{-rt}\beta_2\,d\xi^0_t\\
&+e^{-rt_0}(\alpha_2-\beta_2)[\Delta\xi_{t_0}-(X^{x,\tilde{\nu}^{a,\xi},\xi}_{t_0}-a)],
\end{align*}
and the last term is negative as $\alpha_2<\beta_2$ and by \eqref{eq:xinot}. Since the above argument can be repeated for any simultaneous jump of $\tilde{\nu}^a$ and $\xi$, and any $\omega\in\Omega$, the proof is complete.
\end{proof}

The point of the above lemma is that if costs of control are constant then a simple condition for the absence of arbitrage opportunities implies that if one player picks a reflecting strategy then the other one will pick a control such that $(\nu,\xi)\in\cM$. Therefore, under such assumptions, the equilibria constructed in Theorem \ref{thm:labomba} are also equilibria in the larger class $\cS^\circ\times\cS^\circ$.

\subsection{\textbf{Auxiliary results}}
\label{app-proofs}

We recall here the fundamental solutions $\phi$ and $\psi$ of \eqref{ODE}, and recall also that $\underline{x}$ and $\overline{x}$ are unattainable for $X$ of \eqref{state:XX} and for the uncontrolled diffusion $\XX^{0,0}$ of \eqref{state:X} (cf.\ Assumption \ref{ass:boundary}).
\begin{lemma}\label{lem:refl}
Let $a_*\in\cI$ be arbitrary but fixed. Take 
$$\nu^*_t:=\mathds{1}_{\{t>0\}}\big[(a_*-x)^+ + \nu^{a_*}_t\big],$$
with $\nu^{a_*}$ solving the Skorokhod reflection problem $\textbf{SP}^{\,0}_{a_*+}(x \vee a_*)$ of Lemma \ref{lem:Sk01}. For $y \in (a_*, \overline{x})$, set $\theta_y:=\inf\{t>0\,:\,\widetilde{X}^{\nu_*,0}_t\ge y\}$ and 
\begin{align*}
q(x,y):=\EE_x\big[e^{-r\theta_y}\big],\qquad x\in \cI.
\end{align*}
Then for $i=1,2$ we have 
\begin{align}\label{as1}
\lim_{y\uparrow\overline{x}}q(x,y)\left(1+\int^y_{a_*}|G_i(z)|dz\right)=0.
\end{align} 

Similarly let $b_*\in\cI$ be arbitrary but fixed. Take 
$$\xi^*_t:=\mathds{1}_{\{t>0\}}\big[(x-b_*)^+ + \xi^{b_*}_t\big],$$
with $\xi^{b_*}$ solution to the Skorokhod reflection problem $\textbf{SP}^{\,0}_{b_*-}(x \wedge b_*)$ of Lemma \ref{lem:Sk01}. For $y\in (\underline{x}, b_*)$, set $\eta_y:=\inf\{t>0\,:\,\widetilde{X}^{0,\xi_*}_t\le y\}$ and 
\begin{align*}
p(x,y):=\EE_x\left[e^{-r\eta_y}\right],\qquad x\in \cI.
\end{align*}
Then for $i=1,2$ we have 
\begin{align}\label{as2}
\lim_{y\downarrow\underline{x}}p(x,y)\left(1+\int^y_{b_*}|G_i(z)|dz\right)=0.
\end{align} 
\end{lemma}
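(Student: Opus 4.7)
The plan is to prove \eqref{as1} (the argument for \eqref{as2} is symmetric, using the decreasing fundamental solution and the boundary behaviour at $\underline x$). First I would observe that $\widetilde X^{\nu_*,0}$ is a regular diffusion on $[a_*,\overline x)$ with infinitesimal generator $\cL_\XX$, reflected at $a_*$; since reflection at a lower barrier does not alter behaviour near the opposite boundary, Assumption \ref{ass:boundary} still guarantees that $\overline x$ is unattainable for the reflected process, so $\theta_y\uparrow+\infty$ $\PP_x$-a.s.\ as $y\uparrow\overline x$. The core task is thus to quantify the decay of $q(x,y)=\EE_x[e^{-r\theta_y}]$ and compare it against the possible divergence of $\int_{a_*}^y|G_i(z)|dz$.

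For the decay rate I would introduce the fundamental increasing and decreasing solutions $\widehat\psi$ and $\widehat\phi$ of $\cL_\XX u - ru = 0$ on $\cI$, and set
\[
\Phi_{a_*}(x) := \widehat\phi'(a_*)\widehat\psi(x) - \widehat\psi'(a_*)\widehat\phi(x),
\]
which solves $(\cL_\XX - r)\Phi_{a_*}=0$ with $\Phi'_{a_*}(a_*)=0$. Applying It\^o's formula to $e^{-rt}\Phi_{a_*}(\widetilde X^{\nu_*,0}_t)$ and using that the local time term at $a_*$ vanishes (by the Neumann condition), then localising and optionally sampling at $\theta_y$, yields the explicit formula
\[
q(x,y) = \frac{\Phi_{a_*}(x\vee a_*)}{\Phi_{a_*}(y)},\qquad x\in\cI,\ y\in(a_*,\overline x).
\]

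Next I would connect the growth of $\widehat\psi$ to that of $\psi$ so as to exploit Assumption \ref{ass:GL}. Differentiating $\cL_\XX u - ru = 0$ shows that $u'$ satisfies $\cL_X u' - (r-\mu')u' = 0$; hence $\widehat\psi'=c_1\psi+c_2\phi$ for constants with $c_1>0$ (so that $\widehat\psi$ is increasing up to $\overline x$). Integrating and using that $\int_{a_*}^y\phi(z)dz$ stays bounded (because $\phi\to 0$ near $\overline x$), I get
\[
\widehat\psi(y)\ge c_1\int_{a_*}^y\psi(z)\,dz-C
\]
for $y$ sufficiently close to $\overline x$. Since $\overline x$ is unattainable for $\widetilde X^{0,0}$, the fundamental increasing solution $\widehat\psi$ diverges at $\overline x$, which also forces $\int_{a_*}^{\overline x}\psi(z)dz=+\infty$; $\widehat\phi$ in turn stays bounded, so $\Phi_{a_*}(y)\sim\widehat\phi'(a_*)\widehat\psi(y)$ with a fixed sign and $q(x,y)=O\big(1/\widehat\psi(y)\big)=O\big(1/\!\int_{a_*}^y\psi\,dz\big)$. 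Then Assumption \ref{ass:GL} gives $|G_i(z)|=o(\psi(z))$ near $\overline x$, and a Ces\`aro-type split of the integral at some $z_\varepsilon$ with $|G_i|\le\varepsilon\psi$ on $[z_\varepsilon,\overline x)$ yields
\[
\lim_{y\uparrow\overline x}\frac{\int_{a_*}^y|G_i(z)|dz}{\int_{a_*}^y\psi(z)dz}=0.
\]
Combining these, $q(x,y)\big(1+\int_{a_*}^y|G_i|dz\big)\to 0$, proving \eqref{as1}.

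The part I expect to be most delicate is the boundary analysis of $\widehat\psi$ (and symmetrically of $\widehat\phi$ at $\underline x$), namely showing $\widehat\psi(\overline x-)=+\infty$ from unattainability of $\overline x$ for $\widetilde X^{0,0}$ under Assumption \ref{ass:boundary}. This is a classical consequence of Feller's test applied to $\cL_\XX$, but the natural and entrance-not-exit cases have to be handled separately through the scale function and speed measure of $\widetilde X^{0,0}$; everything else in the argument is bookkeeping once this and the key identity $\widehat\psi'=c_1\psi+c_2\phi$ are in hand. The assertion \eqref{as2} is obtained by the mirror argument, reflecting at $b_*$, using the decreasing solution $\widehat\phi$, $\phi(\underline x+)=+\infty$, and the condition $\limsup_{x\to\underline x}|G_i/\phi|(x)=0$ from Assumption \ref{ass:GL}.
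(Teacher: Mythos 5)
Your route is essentially the paper's: you derive the same explicit ratio $q(x,y)=\Phi_{a_*}(x\vee a_*)/\Phi_{a_*}(y)$ (the paper gets it from the ODE characterisation of \cite{SLG84} with the Neumann condition at $a_*$, you get it by It\^o and optional sampling -- same object), you use the same key identity that derivatives of $(\cL_{\XX}-r)$-harmonic functions solve $\cL_X u-(r-\mu')u=0$ and are therefore combinations of $\psi$ and $\phi$, and you conclude by comparing $\int_{a_*}^{y}|G_i|$ with the growth of $\widehat\psi$ via Assumption \ref{ass:GL} (your Ces\`aro split is the integrated version of the paper's l'H\^opital step). However, there is one genuine gap, and it sits exactly where the paper spends most of its effort: the strict positivity of the $\psi$-coefficient, i.e.\ $c_1>0$ in $\widehat\psi'=c_1\psi+c_2\phi$. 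Your parenthetical justification ``so that $\widehat\psi$ is increasing up to $\overline x$'' only yields $c_1\ge 0$: the degenerate case $c_1=0$, $c_2>0$ also makes $\widehat\psi'=c_2\phi>0$, and it is not excluded by the divergence $\widehat\psi(\overline x-)=+\infty$ either, because when $\overline x=+\infty$ one may well have $\int^{\overline x}\phi(z)\,dz=+\infty$ (e.g.\ $\phi(x)=x^{\gamma_2}$ with $\gamma_2\in(-1,0)$ for a geometric Brownian motion). In that degenerate case your lower bound $\widehat\psi(y)\gtrsim\int_{a_*}^y\psi$ fails and the whole decay estimate collapses, so $c_1>0$ must be proved, not asserted. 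The paper does this by first observing that $y\mapsto q(x,y)$ is strictly decreasing, which translates into the inequality \eqref{kin} for the Wronskian-type combination of $\widehat\psi'$ and $\widehat\phi'$, and then running a two-case analysis (\eqref{kin2}, according to whether the $\psi$-coefficient of $\widehat\phi'$ is negative or zero) to force $\alpha>0$. You would need this, or an equivalent argument, to close your proof.

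Two smaller points. First, your claim that $\int_{a_*}^{y}\phi(z)\,dz$ stays bounded ``because $\phi\to0$ near $\overline x$'' is false when $\overline x=+\infty$; either establish $c_2\ge 0$ (as the paper does for $\beta$, using the boundary behaviour of $\psi/\phi$ at $\underline x$ together with $\widehat\psi'>0$), in which case the term helps you and no integrability is needed, or replace boundedness of $\int\phi$ by $\int_{a_*}^y\phi=o\bigl(\int_{a_*}^y\psi\bigr)$. Second, the divergence $\widehat\psi(\overline x-)=+\infty$ that you flag as delicate follows directly from $\EE_x[e^{-r\tilde\tau_y}]=\widehat\psi(x)/\widehat\psi(y)$ for the uncontrolled $\XX^{0,0}$ together with unattainability of $\overline x$ (Assumption \ref{ass:boundary}); no case-by-case Feller analysis is needed.
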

\begin{proof}
We provide a full proof only for the first claim as the one for the second claim follows by similar arguments. Existence of a solution to $\textbf{SP}^{\,0}_{a_*+}(x \vee a_*)$ is obtained in \cite[Thm.~4.1]{Ta79} for coefficients $\mu,\,\sigma$ in \eqref{state:X} which are uniformly Lipschitz continuous. The relaxation to locally Lipschitz continuous coefficients (Assumption \ref{ass:coef}) follows by standard arguments as the ones used in the proof of our Lemma \ref{lem:Sk01} below.

We notice that 
$$\XX^{x,\nu^*,0}_t = \XX^{x \vee a_*,\nu^{a_*},0}_t, \qquad t>0,$$
and therefore $\theta_y$ is equal to $\tilde{\theta}_y:=\inf\{t>0\,:\,\XX^{x \vee a_*,\nu^{a_*},0}_t\ge y\}$ and $q(x,y) = q(a_*,y)$ for $x \leq a_*$. Functionals involving $\tilde{\theta}_y$ have well known analytical properties, and from now on we will make no distinction between $\theta_y$ and $\tilde{\theta}_y$.

For $x\geq y$ one has $q(x,y)=1$, whereas it is shown in Lemma 2.1 and Corollary 2.2 of \cite{SLG84} that the function $q(\,\cdot\,,y)$ solves
\begin{align}\label{ODE-q}
(\cL_{\XX}-r)q(x,y)=0,\qquad x\in(a_*,y),
\end{align}
with boundary conditions
\begin{align*}
q(y-,y):=\lim_{x\uparrow y}q(x,y)=1,\qquad q_x(a_*+,y):=\lim_{x\downarrow a_*}q_x(x,y)=0.
\end{align*}
In particular we refer to the condition at $a_*$ as the reflecting boundary condition. 

Since $q(\,\cdot\,,y)$ solves \eqref{ODE-q} then it may be written as
\begin{align*}
q(x,y)=A(y)\widetilde{\psi}(x)+B(y)\widetilde{\phi}(x),\qquad x\in(a_*,y),
\end{align*}
where $\widetilde{\psi}$ and $\widetilde{\phi}$ denote the fundamental increasing and decreasing solutions, respectively, of $(\cL_{\XX}-r)u=0$ on $\cI$.
By imposing the reflecting boundary condition we get
\begin{align*}
B(y)=-A(y)\frac{\widetilde{\psi}'(a_*)}{\widetilde{\phi}'(a_*)},
\end{align*}
which plugged back into the expression for $q$ gives
\begin{align}\label{eq:q}
q(x,y)=A(y)\left(\widetilde{\psi}(x)-\frac{\widetilde{\psi}'(a_*)}{\widetilde{\phi}'(a_*)}\widetilde{\phi}(x)\right).
\end{align}
Now, imposing the boundary condition at $y$ we also obtain
\begin{align}\label{eq:A}
A(y)=\left(\widetilde{\psi}(y)-\frac{\widetilde{\psi}'(a_*)}{\widetilde{\phi}'(a_*)}\widetilde{\phi}(y)\right)^{-1}.
\end{align}
Notice that $-\widetilde{\psi}'(a_*)/\widetilde{\phi}'(a_*)>0$, thus implying $A(y),B(y)>0$ and $q(x,y)>0$, as expected. Since the sample paths of $\XX^{\nu^*,0}$ are continuous for all $t>0$ then $y\mapsto q(x,y)$ must be strictly decreasing. Hence
\begin{align*}
q_y(x,y)=A'(y)\left(\widetilde{\psi}(x)-\frac{\widetilde{\psi}'(a_*)}{\widetilde{\phi}'(a_*)}\widetilde{\phi}(x)\right)<0
\end{align*}
which implies $A'(y)<0$ since the term in brackets is positive. From \eqref{eq:A} and direct computation we get
\begin{align*}
A'(y)=-\frac{1}{\left(A(y)\right)^2}\left(\widetilde{\psi}'(y)-\frac{\widetilde{\psi}'(a_*)}{\widetilde{\phi}'(a_*)}\widetilde{\phi}'(y)\right)
\end{align*}
and $A'(y)<0$ implies
\begin{align}\label{kin}
\left(\widetilde{\psi}'(y)-\frac{\widetilde{\psi}'(a_*)}{\widetilde{\phi}'(a_*)}\widetilde{\phi}'(y)\right)>0.
\end{align}
The latter inequality is important to prove \eqref{as1}.

The assumed regularity of $\mu$ and $\sigma$ (see Assumption \ref{ass:coef}) implies that $\widetilde{\psi}'$ solves $\cL_X u(x)-(r-\mu'(x))u(x)=0$ in $\cI$ (cf.\ \eqref{ODE}), and it can therefore be written as a linear combination of the fundamental increasing and decreasing functions $\psi$ and $\phi$. That is, 
\begin{equation}
\label{combination}
\widetilde{\psi}'(x) = \alpha\psi(x) + \beta\phi(x),
\end{equation}
for some $\alpha,\beta \in \mathbb{R}$. Analogously, 
\begin{align}\label{combination2}
\widetilde{\phi}'(x) = \gamma\psi(x) + \delta\phi(x).
\end{align}
Moreover since $\widetilde{\psi}'>0$ and $\widetilde{\phi}'<0$ in $\cI$, and $\underline{x}$ and $\overline{x}$ are unattainable for $X$, then it must be $\alpha,\beta\ge0$ and $\gamma,\delta\le 0$ (because $\psi(x)/\phi(x)\to\infty$ as $x\to\overline x$ and $\psi(x)/\phi(x)\to 0$ as $x\to\underline x$). Noticing that $y>a_*$ was arbitrary, the inequality \eqref{kin} now reads
\begin{align}\label{kin2}
\left(\alpha-\gamma\frac{\widetilde{\psi}'(a_*)}{\widetilde{\phi}'(a_*)}\right)\psi(y)+\left(\beta-\delta\frac{\widetilde{\psi}'(a_*)}{\widetilde{\phi}'(a_*)}\right)\phi(y)>0, \qquad y>a_*.
\end{align}

We aim at showing that $\alpha>0$ and we can do it by considering separately two cases.
\vspace{+4pt}

\noindent\emph{Case 1}. Assume $\gamma<0$. Since the second term in \eqref{kin2} can be made arbitrarily small by letting $y\to\overline{x}$ then it must be $\alpha>\gamma\widetilde{\psi}'(a_*)/\widetilde{\phi}'(a_*)>0$.
\vspace{+4pt}

\noindent\emph{Case 2}. Assume $\gamma=0$. If $\alpha=0$ then the first term on the left-hand side of \eqref{kin2} is zero and by using \eqref{combination} and \eqref{combination2} we get from \eqref{kin2}
\begin{align*}
0<\left(\beta-\delta\frac{\widetilde{\psi}'(a_*)}{\widetilde{\phi}'(a_*)}\right)\phi(y)=\left(\beta-\delta\frac{\beta\phi(a_*)}{\delta\phi(a_*)}\right)\phi(y)=0,
\end{align*}
hence a contradiction. So it must be $\alpha>0$.
\vspace{+4pt}

Finally, for fixed $x\in (\underline{x},y)$, there exists a constant $C=C(a_*,x)>0$ such that \eqref{eq:q} and \eqref{eq:A} give
\begin{align}\label{last}
0\le q(x,y)\left(1+\int^y_{a_*}|G_i(z)|dz\right)\le\frac{C}{\widetilde{\psi}(y)}\left(1+\int^y_{a_*}|G_i(z)|dz\right). 
\end{align}
Now letting $y\to\overline{x}$ we have $\widetilde{\psi}(y)\to\infty$ as $\overline{x}$ is unattainable for $\XX^{0,0}$. We have two possibilities: 
\begin{itemize}
\item[(a)] $\int_{a_*}^{\overline{x}}|G_i(z)|dz<+\infty$ and therefore \eqref{as1} holds trivially from \eqref{last};
\item[(b)] $\int_{a_*}^{\overline{x}}|G_i(z)|dz=+\infty$ so that by using de l'H\^opital rule in \eqref{last}, \eqref{combination} and Assumption \ref{ass:GL} we get
\begin{align*}
\lim_{y\to\overline{x}}\frac{1}{\widetilde{\psi}(y)}\int^y_{a_*}|G_i(z)|dz=\lim_{y\to\overline{x}}\frac{|G_i(y)|}{\alpha\psi(y)}=0.
\end{align*}
\end{itemize}
\end{proof}

\begin{proof}[\textbf{Proof of Lemma \ref{lem:Sk01}}.]
We provide here a short proof of the existence of a unique solution to the Skorokhod reflection 
problem $\textbf{SP}^{\,\xi}_{a+}(x)$. 

Notice that the drift and diffusion coefficients in the dynamics \eqref{state:X} are locally Lipschitz-continuous due to our Assumption \ref{ass:coef}. So we first prove the result for Lipschitz coefficients, and then extend it to locally Lipschitz ones. Notice that here we are not assuming sublinear growth of $\mu$ and $\sigma$ but we rely on non attainability of $\underline{x}$ and $\overline{x}$ for the uncontrolled process $\XX^{0,0}$.
Existence of a unique solution to problem $\textbf{SP}^{\,\nu}_{b-}(x)$ can be shown by analogous arguments. For simplicity, from now on we just write $\textbf{SP}^{\,\xi}_{a+}$ and omit the dependence on $x$.
\vspace{+5pt}

\noindent\emph{Step 1 - Lipschitz coefficients}. Here we assume $\mu,\sigma\in\text{Lip}(\cI)$ with constant smaller than $L>0$. Let $a\in\cI$, $x \geq a$ and $\xi\in\cS$, and consider the sequence of processes defined recursively by $X^{[0]}_t = x$, $\nu^{[0]}_t=0$, and 
\begin{align}
\label{steps}
\left\{
\begin{array}{l}
\displaystyle X^{[k+1]}_t = x + \int_0^t \mu(X^{[k]}_u) du + \int_0^t \sigma(X^{[k]}_u) dW_u + \nu^{[k+1]}_t - \xi_t,\\[+9pt]
\displaystyle \nu^{[k+1]}_t = \sup_{0 \leq s \leq t}\big[ a - x -\int_0^s \mu(X^{[k]}_u) du - \int_0^s \sigma(X^{[k]}_u) dW_u + \xi_s\big],
\end{array}
\right.
\end{align}
for any $k \geq 0$ and $t \geq 0$. Notice that at any step the process $X^{[k+1]}$ is kept above the level $a$ by the process $\nu^{[k+1]}$ with minimal effort, i.e.\ according to a Skorokhod reflection at $a$. The Lipschitz-continuity of $\mu$ and $\sigma$  allows to obtain from \eqref{steps} the estimate
\begin{equation}
\label{estimate}
\EE_x\Big[\sup_{0 \leq s \leq t}\big|X^{[k+1]}_s - X^{[k]}_s\big|^2\Big] \leq C\EE_x\Big[\int_0^t\big|X^{[k]}_s - X^{[k-1]}_s\big|^2 ds\Big],
\end{equation}
for $k\geq 1$ and for some positive $C:=C(x,a,L)$.
Since for $k=0$ one has $\EE_x[\sup_{0 \leq s \leq t}|X^{[1]}_s - x|^2] \leq R t$ for some $R:=R(x,a,L) > 0$, then an induction argument together with \eqref{estimate} yield 
\begin{equation}
\label{estimate-2}
\EE_x\Big[\sup_{0 \leq s \leq t}\big|X^{[k+1]}_s - X^{[k]}_s\big|^2\Big] \leq \frac{({R}_0 t)^{k+1}}{(k+1)!}, \quad k\geq 0,
\end{equation}
for some other positive ${R}_0:={R}_0(x,a,L)$.
Analogously,
\begin{equation}
\label{estimate-3}
\EE_x\Big[\sup_{0 \leq s \leq t}\big|\nu^{[k+1]}_s - \nu^{[k]}_s\big|^2\Big] \leq \frac{({R}_1 t)^{k+1}}{(k+1)!}, \quad k\geq 0,
\end{equation}
with ${R}_1:={R}_1(x,a,L) > 0$.

Thanks to \eqref{estimate-2} and \eqref{estimate-3} we can now proceed with an argument often used in SDE theory for the proof of existence of strong solutions (see, e.g., the proof of \cite[Ch.~5, Thm.~2.9]{KS}). That is, we use Chebyshev inequality and Borel-Cantelli's lemma to find that $(X^{[k+1]},\nu^{[k+1]})_{k\geq 0}$ converges a.s., locally uniformly in time, as $k \uparrow \infty$. We denote this limit by $(\widetilde{X}^{\nu^a,\xi},\nu^a)$. By Lipschitz continuity of $\mu$ and $\sigma$ and the same arguments as above we also obtain that the sequences $(\int_0^t \mu(X^{[k]}_u) du)_{k\geq 0}$ and $(\int_0^t \sigma(X^{[k]}_u) dW_u)_{k\geq 0}$ converge a.s., locally uniformly in time. Then we have a.s.\ (up to a possible subsequence)
\begin{align*}
\nu^a_t=\lim_{k\uparrow \infty }\nu^{[k+1]}_t = & \lim_{k\uparrow \infty }\sup_{0 \leq s \leq t}\big[ a - x -\int_0^s \mu(X^{[k]}_u) du - \int_0^s \sigma(X^{[k]}_u) dW_u + \xi_s\big] \\
 = & \sup_{0 \leq s \leq t}\big[ a - x -\int_0^s \mu(\widetilde{X}^{\nu^a,\xi}_u) du - \int_0^s \sigma(\widetilde{X}^{\nu^a,\xi}_u) dW_u + \xi_s\big].
\end{align*}
It thus follows that $(\widetilde{X}^{\nu^a,\xi},\nu^a)$ solve $\textbf{SP}^{\,\xi}_{a+}$. Finally, uniqueness can be proved as, e.g., in the proof of \cite[Thm.~4.1]{Ta79}.
\vspace{+5pt}

\noindent\emph{Step 2 - locally Lipschitz coefficients}. Here we assume $\mu$ and $\sigma$ as in Assumption \ref{ass:coef}.
Let $x_n\uparrow\overline{x}$ and define 
\begin{align*}
\mu_n(x)=\mu(x)\mathds{1}_{\{x\le x_n\}}+\mu(x_n)\mathds{1}_{\{x>x_n\}},\quad\sigma_n(x)=\sigma(x)\mathds{1}_{\{x\le x_n\}}+\sigma(x_n)\mathds{1}_{\{x>x_n\}}.
\end{align*}
For each $n$ we denote by $\textbf{SP}^{\,\xi\,(n)}_{a+}$~the Skorokhod problem $\textbf{SP}^{\,\xi}_{a+}$ but for the dynamics
\begin{align*}
dX_t=\mu_n(X_t)dt+\sigma_n(X_t)dW_t+d\nu_t-d\xi_t
\end{align*}
rather than for \eqref{state:X}.

Since for each $n$ we have $\mu_n$ and $\sigma_n$ uniformly Lipschitz on $[a,\overline{x})$, then Step 1 guarantees that there exists a unique $(X^{(n)},\nu^{(n)})$ that solves $\textbf{SP}^{\,\xi\,(n)}_{a+}$.
We denote $\tau_n:=\inf\{t>0\,:\,X^{(n)}_t\ge x_n \}$ and for all $t\le \tau_n$ we have
\begin{align}
X^{(n)}_t =& x + \int_0^t \mu_n(X^{(n)}_u) du + \int_0^t \sigma_n(X^{(n)}_u) dW_u + \nu^{(n)}_t - \xi_t\nonumber\\
=&x + \int_0^t \mu(X^{(n)}_u) du + \int_0^t \sigma(X^{(n)}_u) dW_u + \nu^{(n)}_t - \xi_t\\
\nu^{(n)}_t =& \sup_{0 \leq s \leq t}\big[ a - x -\int_0^s \mu_n(X^{(n)}_u) du - \int_0^s \sigma_n(X^{(n)}_u) dW_u + \xi_s\big]\nonumber\\
=&\sup_{0 \leq s \leq t}\big[ a - x -\int_0^s \mu(X^{(n)}_u) du - \int_0^s \sigma(X^{(n)}_u) dW_u + \xi_s\big].
\end{align}
Since the coefficients above do not depend on $n$, by construction the process $(X^{(n)}_t,\nu^{(n)}_t)$ also solves $\textbf{SP}^{\,\xi}_{a+}$ for $t\le \tau_n$. Uniqueness of the solution for $\textbf{SP}^{\,\xi\,(n)}_{a+}$ implies that $(X^{(n)}_t,\nu^{(n)}_t)$ is also the solution to $\textbf{SP}^{\,\xi\,(m)}_{a+}$ for $t\le\tau_m$, for each $m\le n$, and therefore the unique solution to $\textbf{SP}^{\,\xi}_a$ up to the stopping time $\tau_n$.

Fix an arbitrary $T>0$. For all $\omega\in\{\tau_n>T\}$ and all $t\le T$ we can define $(\widetilde{X}^{\nu^a,\xi}_t,\nu^a_t):=(X^{(n)}_t,\nu^{(n)}_t)$ so that the couple $(\widetilde{X}^{\nu^a,\xi}_t,\nu^a_t)$ is the unique solution to $\textbf{SP}^{\,\xi}_{a+}$ for $t\le T$. It remains to show that 
$\lim_{n\to\infty}\PP(\tau_n>T)= 1$
so that we have constructed a unique solution to $\textbf{SP}^{\,\xi}_{a+}$ for a.e.~$\omega\in\Omega$ up to time $T$. 

Let us consider first the case $\xi\equiv0$. It follows from Lemma \ref{lem:refl} that $\EE_x[e^{-r\theta_{x_n}}]\to 0$ as $n\to\infty$ with $\theta_{x_n}=\inf\{t>0\,:\,\XX^{\nu^a,0}_t\ge x_n\}$, and therefore $\theta_{x_n}\to \infty$ $\PP_x$-a.s. Hence 
\begin{align*}
\lim_{n\to \infty}\PP(\tau_n>T)=\lim_{n\to\infty}\PP(\theta_{x_n}>T)=1,
\end{align*}
because $\tau_n=\theta_{x_n}$ $\PP$-a.s.
To conclude it is suffices to notice that $\widetilde{X}^{\nu^a,\xi}_t\le\widetilde{X}^{\nu^a,0}_t$, for all $t>0$, and arbitrary $\xi\in\cS$. Then $\overline{x}$ is unattainable for $\widetilde{X}^{\nu^a,\xi}$ as well. 
\end{proof}

\medskip

\begin{proof}[\textbf{Proof of Proposition \ref{prop:summ}}.]
The proofs are contained in \cite{DeAFeMo15} and here we provide precise references to the relevant results in each case. In particular one must notice that Appendix A.3 of \cite{DeAFeMo15} addresses the specific setting of the state dependent discount factor $r-\mu'(x)$ that appears in our stopping functional \eqref{functional0}--\eqref{functional0b}.
\vspace{+4pt}

\emph{$1$.} It follows from Theorem 3.2 (and Appendix A.3) of \cite{DeAFeMo15}. 
\vspace{+4pt}

\emph{$2$.} It follows from Proposition 3.12 (and Appendix A.3) of \cite{DeAFeMo15}. For the sake of completeness here we notice that to prove that $x^\infty_2$ uniquely solves $\vartheta_2(x)=(G_2/\psi)(x)$ in $(\hat{x}_2,\overline{x})$ it is useful to change variables. Defining $y=(\psi/\phi)(x)=:F(x)$, where $F$ is strictly increasing, and introducing $\hat{G}_2(y):=\big[(G_2/\phi)\circ F^{-1}\big](y)$, $y>0$, it follows from simple algebra (cf.~Appendix A.1 of \cite{DeAFeMo15}) that $\vartheta_2(x)=(G_2/\psi)(x)$ is equivalent to $\hat{G}'_2(y)y=\hat{G}_2(y)$. It is shown in \cite[Lem.~3.6]{DeAFeMo15} that the latter equation has a unique root $y^\infty_2$ in the interval $(\hat{y}_2,\infty)$, with $\hat{y}_2:=F(\hat{x}_2)$ and $\infty=\lim_{x\uparrow\overline{x}}F(x)$. Therefore $x^\infty_2=F^{-1}(y^\infty_2)$ solves the initial problem in $(\hat{x}_2,\overline{x})$.
\end{proof}

\bigskip

\textbf{Acknowledgments.} The first named author was partially supported by EPSRC grant EP/K00557X/1; financial support by the German Research Foundation (DFG) through the Collaborative Research Centre 1283 ``Taming uncertainty and profiting from randomness and low regularity in analysis, stochastics and their applications'' is gratefully acknowledged by the second author. 

We thank two anonymous Referees for their pertinent comments that helped to improve a previous version of this paper. 
We also thank Cristina Costantini and Paavo Salminen for discussions and references on reflected diffusions and the Skorokhod reflection problem, and Jan-Henrik Steg for the valuable comments on closed-loop strategies in games of singular controls.


\end{document}